\numberwithin{equation}{section}
 \newtheorem{theorem}{Theorem}[section]
\newtheorem{definition}[theorem]{Definition}
\newtheorem{corollary}[theorem]{Corollary}
\newtheorem{lemma}[theorem]{Lemma}
\newtheorem{remark}[theorem]{Remark}
\begin{document}
\title{\Large \bfseries A note on condition numbers for generalized inverse $C^{\ddagger}_{A}$ and their statistical estimation\thanks
{The work is supported by the National Natural Science Foundation of China (Grant Nos. 11771265 ).}}
\author{Mahvish Samar\thanks{Corresponding author. E-mail addresses: mahvishsamar@hotmail.com}, Abdual Shahkoor}
\date{\small { Department of Mathematics, Shantou University, Shantou 515063, P. R. China \\
Department of Mathematics, Khwaja Fareed University of Engineering and Information Technology, Rahim Yar Khan 64200, Pakistan
}} \maketitle

 \maketitle{\raggedleft\bfseries{\normalsize Abstract}}\\
In this paper, we consider the condition number for the generalized inverse $C^{\ddagger}_{A}$. We first present the explicit expression of normwise mixed and componentwise condition numbers. Then, we derive the
explicit expression of normwise condition number without Kronecker product using the classical method for condition numbers. With the intermediate result, i.e., the derivative of $C^{\ddagger}_{A}$,
 we can recover the explicit expressions of condition numbers for solution of Indefinite least squares problem with equality constraint.
 To estimate these condition numbers with high reliability, we choose the probabilistic spectral norm estimator and the small-sample statistical condition estimation method and devise three algorithms. Numerical experiments are provided to illustrate
the obtained results.
  \vspace{.3cm}

{\raggedleft \em AMS classification:}\ 65F20, 65F35, 65F30, 15A12, 15A60
\vspace{.3cm}

{\raggedleft \em Keywords:} Generalized inverse $C^{\ddagger}_{A}$ ; Normwise condition number; Mixed and Componentwise condition numbers; ILSEP; Probabilistic spectral norm estimator; Small-sample statistical
condition estimation

\section{Introduction}\label{sec.1}
\vspace{-2pt}
The generalized inverse $C^{\ddagger}_{A}$ is defined by
\begin{align}\label{1.1}
C^{\ddagger}_{A}
=(I-(PQP)^{\dag}Q)C^{\dag},
\end{align}
where $Q=A^{T}JA $, $A \in \mathbb{R}^{(p+q)\times n}$ is the weight matrix, and $P=I-C^{\dag}C$ with $C^{\dag}$ denoting the Moore-Penrose inverse of $C$ and $C$ may not have full
rank and $J$ is a signature matrix defined by
\begin{equation}\label{1.2}
J = \left[ {\begin{array}{*{20}{c}}
{{I_p} }&0\\
0&{- {I_q}}
\end{array}} \right], \quad p+q=m.
\end{equation}
To make the generalized inverse $C^{\ddagger}_{A}$ be unique  (see \cite[Theorem 2.1]{[1]} or \cite[Theorem 2.2]{[2]}), throughout this paper, we assume that $Q=A^{T}JA $ is positive definite and
\begin{align}\label{1.2}
\mathrm{rank}\begin{pmatrix}
      A \\
      C \\
    \end{pmatrix}=n
.
\end{align}

 The generalized inverse $C^{\ddagger}_{A}$ is from the indefinite least squares problem with equality constraint (ILSEP).
 The ILSEP problem and its special cases have attracted many researchers to study its algorithms, error analysis, and perturbation theory (see e.g., \cite{[4],[5],[6],[7],[8],[9],[10],[11]}). Here, we only introduce some works on perturbation analysis of ILSEP.
Liu and wang \cite{[1]} first investigated the perturbation theory for this problem. The obtained results were extended by
Shi and Liu \cite{[8]}  based on the hyperbolic MGS elimination method. Later, Wang  \cite{[9]} revisited the perturbation theory of ILSEP and recovered its upper bound. In 2015, Li and Wang \cite{[12]} gave the  mixed and componentwise condition numbers of ILSEP.
It is worth to mention that the systematic
theory for normwise condition number was first given by Rice \cite{[15]}  and the terminologies of mixed and
componentwise condition numbers were first introduced by Gohberg and Koltracht \cite{[16]}.

When $q = 0$ and $C$ has full row rank, the $C^{\ddagger}_{A}$ reduce to generalized inverse $C^{\dag}_{K}$, which is from the  least squares problem with equality constraint (LSEP). Eld\'{e}n \cite{[3]} discussed generalized inverses its algorithm; Wei and Zhang \cite{[2]} extended $C^{\dag}_{K}$ to the $MK$-weighted generalized inverse $C^{\dag}_{MK}$ and discussed its structure and uniqueness; Wei \cite{[17]} studied the expression of $C^{\dag}_{K}$ based on GSVD; Gulliksson et al. \cite{[71]} presented a perturbation equation of $C^{\dag}_{K}$. Recently, Mahvish et al. \cite{[72]} introduced its condition numbers.

 However, to our best knowledge, there is no work on condition numbers of $C^{\ddagger}_{A}$ so far.  It is interesting to investigate the condition numbers because they play an important role in the research of ILSEP.
Specifically, we will discuss the normwise, mixed and componentwise condition numbers for $C^{\ddagger}_{A}$, whose explicit expressions are given in Section 3. Meanwhile, in Section 3, we also discuss how to recover the expressions of condition numbers for solution of ILSEP and how to obtain the corresponding results for residuals of this problem with the help of the derivative of $C^{\ddagger}_{A}$. Considering that it is expensive to compute these condition numbers, we investigate the statistical estimation of these condition numbers by the probabilistic spectral norm estimator \cite{[18]} and the small-sample statistical condition estimation (SSCE) method \cite{[19]}. Three related algorithms are devised in Section 4. In addition, Section 2 provides some useful notation and preliminaries and Section 5 presents some
 numerical examples to illustrate the obtained results.

\vspace{-6pt}

\section{Preliminaries}
\label{sec.2}
\vspace{-2pt}
To give the definitions of condition numbers, we first introduce the entry-wise division \cite{[20]} between the vectors $a, b \in \mathbb{R}^{m}$ defined by
\begin{eqnarray}\label{2.56}
\frac{a}{b}={\rm{diag}}(b^{\ddagger})a,
\end{eqnarray}
where ${\rm{diag}}(b^{\ddagger})$ is diagonal with diagonal elements $b^{\ddagger}_{1},...,b^{\ddagger}_{m}.$ Here,
for a number $c \in \mathbb{R}$,  $c^{\ddagger}$ is defined by
\begin{eqnarray*}
 c^{\ddagger}= \begin{cases} {c^{-1}},       & {\rm if} \ c\neq 0,  \\
1 , &{\rm if }\  c=0.
 \end{cases}
   \end{eqnarray*}
It is obvious that $\frac{a}{b}$ has components $\left(\frac{a}{b}\right)_{i}=b_{i}^{\ddagger}a_{i}.$ Similarly,
for $A=(a_{ij})\in \mathbb{R}^{m\times n},\,\, B=(b_{ij}) \in \mathbb{R}^{m\times n},$ we define $\frac{A}{B}$ as follows
    $$\left(\frac{A}{B}\right)_{ij}= b^{\ddagger}_{ij}a_{ij}.$$
With the entry-wise division,  we denote the relative distance between $a$ and $b$ as
\begin{align*}
d(a,b)=\left\|\frac{a-b}{b}\right\|_\infty=\mathop {\max }\limits_{i=1,\cdots,m}\left\{{|b^{\ddagger}_{i}|}{|a_{i}-b_{i}|}\right\}.
\end{align*}
That is, we consider the relative distance at nonzero components, while the absolute
distance at zero components.
In addition, for $\varepsilon >0$, we denote $B^{\circ}(a,\varepsilon)=\{x \in\mathbb{R}^{m}|\,\,|x_{i}-a_{i}|\leq \epsilon |a_{i}|, i=1,\cdots,m\}$ and $B(a,\varepsilon)=\{x\in\mathbb{R}^{m} \mid\|x-a\|_2\leqslant \varepsilon\|a\|_2\}$.

Now, we list the definitions of the normwise, mixed and componentwise condition numbers.
\begin{definition}\label{dfn2.1}{\rm (\cite{[20]})}\ \
Let $F:\mathbb{R}^{p}\rightarrow \mathbb{R}^{q}$ be a continuous mapping defined on an open set ${\rm {Dom}}(F)\subset \mathbb{R}^{p}$ 
and $a \in {\rm {Dom}}(F)$ satisfy $ a\neq 0$ and $F(a)\neq 0$.

{\rm (i)}\ \ The normwise condition number of $F$ at $a$ is defined by
\begin{eqnarray*}
 \qquad \qquad \qquad \qquad n(F,a) &=& \lim_{\varepsilon\rightarrow 0}\sup_{\substack{x\in B (a,\varepsilon)\\x\neq a}}\left(\frac{\|F(x)-F(a)\|_{2}}{\|F(a)\|_{2}}\Big/\frac{\|x-a\|_{2}}{\|a\|_{2}}\right).
\end{eqnarray*}

{\rm (ii)}\ \ The mixed condition number of $F$ at $a$ is defined by \
\begin{eqnarray*}
\qquad \qquad   m(F,a) &=&\lim_{\varepsilon\rightarrow 0}\sup_{\substack{x\in B^{o}(a,\varepsilon)\\x\neq a}}\frac{\|F(x)-F(a)\|_{\infty}}{\|F(a)\|_{\infty}}\frac{1}{d(x,a)}.
\end{eqnarray*}

{\rm (iii)}\ \ The componentwise condition number of $F$ at $a$ is defined by

\begin{eqnarray*}
  c(F,a) &=&  \lim_{\varepsilon\rightarrow 0}\sup_{\substack{x\in B^{o}(a,\varepsilon)\\x\neq a}}\frac{d\big(F(x),F(a)\big)}{d(x,a)}.
\end{eqnarray*}
\end{definition}
With the Fr\'echet derivative, the following lemma gives the explicit representations of these three condition numbers.
\begin{lemma}\label{lem2.3} {\rm (\cite{[20]})}
With the same assumptions as in Definition 2.1, and if $F$ is $Fr\acute{e}chet$ differentiable at $a$, we have
\begin{center}
$n(F, a)= \frac{\| \mathrm{d}F(a)\|_{2}\|a\|_{2}}{\|F(a)\|_{2}}$,\
$m(F, a)= \frac{\| |\mathrm{d}F(a)||a|\|_{\infty}}{\|F(a)\|_{\infty}}$,\
$c(F, a)= \left\|\frac{|\mathrm{d}F(a)||a|}{|F(a)|}\right\|_{\infty}$,
\end{center}
where $\mathrm{d}F(a)$ is the Fr\'echet derivative of $F$ at $a$.
\end{lemma}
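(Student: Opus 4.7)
The plan is to linearize $F$ using its Fr\'echet derivative at $a$ and thereby reduce each supremum in Definition \ref{dfn2.1} to a computable operator-norm or matrix-vector problem. Since $F$ is Fr\'echet differentiable at $a$, we may write
\begin{align*}
F(x)-F(a)=\mathrm{d}F(a)(x-a)+r(x-a),\qquad \|r(h)\|/\|h\|\to 0 \text{ as } h\to 0.
\end{align*}
After dividing the ratios in Definition \ref{dfn2.1} by the scale $\varepsilon$ and letting $\varepsilon\to 0$, the remainder $r(x-a)$ contributes only an $o(1)$ term, so only the linear part $\mathrm{d}F(a)(x-a)$ matters asymptotically.

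For the normwise case, every $x\in B(a,\varepsilon)$ can be written as $x=a+\varepsilon\|a\|_2 u$ with $\|u\|_2\le 1$. Substituting into the normwise ratio gives
\begin{align*}
\frac{\|F(x)-F(a)\|_2/\|F(a)\|_2}{\|x-a\|_2/\|a\|_2}=\frac{\|\mathrm{d}F(a) u\|_2\,\|a\|_2}{\|u\|_2\,\|F(a)\|_2}+o(1),
\end{align*}
and taking the supremum over nonzero $u$ with $\|u\|_2\le 1$ recovers the spectral norm $\|\mathrm{d}F(a)\|_2$ in the numerator. Passing to $\varepsilon\to 0$ yields the first formula.

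For the mixed and componentwise cases, the key is a componentwise parameterization of $B^o(a,\varepsilon)$: write $x-a=\varepsilon\,\mathrm{diag}(|a|)\,t$ with $\|t\|_\infty\le 1$ (coordinates with $a_i=0$ are frozen by the definition of $B^o$ to $x_i=a_i$, so they do not contribute). Using the $\cdot^{\ddagger}$ convention one verifies $d(x,a)=\varepsilon\|t\|_\infty$. Setting $M:=\mathrm{d}F(a)\,\mathrm{diag}(|a|)$, the linearization gives $F(x)-F(a)=\varepsilon Mt+o(\varepsilon)$, and the mixed formula then follows from the identity
\begin{align*}
\sup_{\|t\|_\infty\le 1}\|Mt\|_\infty=\|M\|_\infty=\max_i\sum_j|\mathrm{d}F(a)_{ij}|\,|a_j|=\||\mathrm{d}F(a)|\,|a|\|_\infty.
\end{align*}

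For the componentwise case the ratio becomes $\max_i|F(a)_i^{\ddagger}|\,|(Mt)_i|/\|t\|_\infty+o(1)$. Since $\max_i$ is over a finite index set, one has $\sup_t\max_i=\max_i\sup_t$, and for each fixed $i$ the inner supremum equals $\sum_j|M_{ij}|=(|\mathrm{d}F(a)|\,|a|)_i$, attained at a suitable sign vector; combining these yields $c(F,a)=\||\mathrm{d}F(a)|\,|a|/|F(a)|\|_\infty$. I expect the main bookkeeping step, rather than a genuine obstacle, to be handling indices where $a_i$ or $F(a)_i$ vanish: in the former case the perturbation is frozen at that coordinate, and in the latter the convention $F(a)_i^{\ddagger}=1$ turns the relative distance at that coordinate into an absolute distance, but this still produces the correct limit because $|F(x)_i-F(a)_i|=O(\varepsilon)$ by the linearization and the formula evaluates consistently under the same convention.
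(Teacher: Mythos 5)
The paper states this lemma without proof, importing it directly from \cite{[20]}, so there is no in-paper argument to compare against; your linearization proof is the standard one for this result and is correct. The only points that deserve explicit care — that the Fr\'echet remainder is $o(\|x-a\|)$ \emph{uniformly} so it survives division by $\|x-a\|_2$ (resp.\ by $d(x,a)=\varepsilon\|t\|_\infty$) in the limit, and the handling of frozen coordinates $a_i=0$ and of indices with $F(a)_i=0$ under the $\ddagger$ convention — are all addressed in your write-up.
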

The operator `vec' defined as
\begin{eqnarray*}
{\rm vec}(A)=\left[a_{1}^{T},\cdots,a_{n}^{T}\right]^{T}\in \mathbb{R}^{mn},
\end{eqnarray*}
for $A=[a_{1},\cdots,a_{n}]\in\mathbb{R}^{m\times n}$ with $a_{i}\in \mathbb{R}^{m}$
and the Kronecker product
between $A=(a_{ij})\in\mathbb{R}^{m\times n}$ and $B \in {\mathbb{R}^{p \times q}}$ defined as
 $ A\otimes B= \left[ a_{ij}B\right] \in \mathbb{R}^{mp \times nq}$
play  important roles in obtaining the expressions of the condition numbers. Some useful results on these two tools are introduced as follows \cite[Chapter 4]{[21]}.

\begin{eqnarray}
{\rm {vec}}(AXB)&\!\!=\!\!&\left(B^{T}\otimes A\right){\rm vec}(X)\label{2.2},\\
{\rm {vec}}\left(A^{T}\right)&\!\!=\!\!&\Pi_{mn} {\rm vec}(A)\label{2.3},\\
\|A\otimes B\|_{2}&\!\!= \!\!&  \|A\|_{2}\|B\|_{2}\label{2.4},\\
(A \otimes B)^T&\!\!= \!\!&  (A^T \otimes B^T),\label{2.5}\\
(A \otimes B)(C \otimes D)&\!\!= \!\!&(AC) \otimes (BD),\label{2.7}\\
\Pi_{pm} (A \otimes B) \Pi_{nq}&\!\!= \!\!& (B \otimes A),\nonumber
\end{eqnarray}
where $X$, $C$ and $D$ are of suitable orders, and $\Pi_{mn} \in {\mathbb{R}^{mn\times mn}}$ is the { vec-permutation matrix} which depends only on the dimensions $m$ and $n$. Note that if $n=1$, then $ \Pi_{nq}=I_q$ and hence
\begin{eqnarray}\label{2.6}
\Pi_{pm} (A \otimes B)&\!\!= \!\!& (B \otimes A).
\end{eqnarray}
In addition, 
from the definition of Kronecker product, we also have that when $m=1$ and $q=1$, i.e., when $A$ is a row vector and $B$ is a column vector,
\begin{eqnarray}\label{1.4}
A \otimes B=BA.
\end{eqnarray}
Besides, the following two lemmas are also useful for deriving the condition numbers and their upper bounds.
\begin{lemma}\label{lem2.5}{\rm (\cite[P. 174, Theorem 5]{[22]})}
Let $S$ be an open subset of $\mathbb{R}^{n\times q}$ , and let $F : S \longrightarrow \mathbb{R}^{m\times p}$ be a matrix function
defined and $k \geq 1$ times (continuously) differentiable on S. If $\mathrm{rank}(F(X))$ is constant on $S,$ then $F^{\dag} : S \longrightarrow \mathbb{R}^{p \times m}$ is $k$ times (continuously) differentiable
on $S$, and
\begin{eqnarray}\label{2.10}
\mathrm{d}F^{\dag}&=&-F^{\dag}\mathrm{d}FF^{\dag}+F^{\dag}F^{\dag^{T}}\mathrm{d}F^{T}(I_{m}-FF^{\dag})+(I_{p}-F^{\dag}F)\mathrm{d}F^{T}F^{\dag^{T}}F^{\dag}.
\end{eqnarray}
\end{lemma}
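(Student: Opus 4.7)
The plan is to prove the formula by exploiting the four Moore--Penrose defining identities
$FF^{\dag}F=F$, $F^{\dag}FF^{\dag}=F^{\dag}$, $(FF^{\dag})^{T}=FF^{\dag}$, $(F^{\dag}F)^{T}=F^{\dag}F$,
together with the two orthogonal projectors $P_{L}=FF^{\dag}$ and $P_{R}=F^{\dag}F$. The strategy is to decompose
\[
\mathrm{d}F^{\dag}=P_{R}\,\mathrm{d}F^{\dag}P_{L}+P_{R}\,\mathrm{d}F^{\dag}(I-P_{L})+(I-P_{R})\,\mathrm{d}F^{\dag}P_{L}+(I-P_{R})\,\mathrm{d}F^{\dag}(I-P_{L}),
\]
and evaluate each of the four blocks separately, using a different one of the Moore--Penrose identities in each case.

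Before that, I would justify differentiability. The constant-rank hypothesis is essential: on a neighborhood where $\mathrm{rank}(F(X))=r$ one can produce a local smooth SVD-type factorization $F=U\Sigma V^{T}$ with $\Sigma$ of size $r\times r$ and invertible, whence $F^{\dag}=V\Sigma^{-1}U^{T}$ inherits the smoothness of $F$. This gives the $C^{k}$ regularity of $F^{\dag}$ and licenses the formal differentiation that follows; without constant rank, $F^{\dag}$ is famously not even continuous.

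For the diagonal block I would differentiate $FF^{\dag}F=F$ to obtain
$F(\mathrm{d}F^{\dag})F=\mathrm{d}F-\mathrm{d}F\cdot P_{R}-P_{L}\cdot \mathrm{d}F$, then multiply on the left and right by $F^{\dag}$ and use $F^{\dag}FF^{\dag}=F^{\dag}$ to collapse the three terms into
$P_{R}\,\mathrm{d}F^{\dag}\,P_{L}=-F^{\dag}\,\mathrm{d}F\,F^{\dag}$. For the $(I-P_{R})\times(I-P_{L})$ block I would differentiate $F^{\dag}=P_{R}F^{\dag}$ to get $(I-P_{R})\,\mathrm{d}F^{\dag}=\mathrm{d}P_{R}\cdot F^{\dag}$ and then note $F^{\dag}(I-P_{L})=0$, which kills this block. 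For the remaining two off-diagonal blocks I would invoke the symmetry conditions: differentiating $(FF^{\dag})^{T}=FF^{\dag}$ and multiplying on the left by $F^{\dag}$ yields an expression for $P_{R}\,\mathrm{d}F^{\dag}$ in which right multiplication by $(I-P_{L})$ annihilates every term containing $F^{T}$ (since $F^{T}P_{L}=F^{T}$) and every term containing $F^{\dag}$ on the right (since $F^{\dag}P_{L}=F^{\dag}$), leaving exactly
$P_{R}\,\mathrm{d}F^{\dag}(I-P_{L})=F^{\dag}(F^{\dag})^{T}\,\mathrm{d}F^{T}(I-FF^{\dag})$. An entirely symmetric argument with $(F^{\dag}F)^{T}=F^{\dag}F$, multiplied on the right by $F^{\dag}$ and then by $(I-P_{R})$ on the left, gives the third block $(I-P_{R})\,\mathrm{d}F^{\dag}P_{L}=(I-F^{\dag}F)\,\mathrm{d}F^{T}(F^{\dag})^{T}F^{\dag}$. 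Summing the four pieces produces (\ref{2.10}).

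The main obstacle is bookkeeping rather than ideas: one must repeatedly exploit the mutually absorbing identities $F^{\dag}P_{L}=F^{\dag}$, $P_{R}F^{\dag}=F^{\dag}$, $F^{T}P_{L}=F^{T}$, $P_{R}F^{T}=F^{T}$ to discard terms after each differentiation, and care is required in the second and third blocks to differentiate the \emph{symmetry} identities rather than the projector equations themselves, since only the symmetry identities supply a $\mathrm{d}F^{T}$ factor, which is what makes the unsymmetric-looking right-hand side of (\ref{2.10}) appear in the first place.
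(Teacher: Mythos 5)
The paper does not prove this statement at all: it is quoted verbatim from Magnus and Neudecker \cite[P.~174, Theorem~5]{[22]} and used as a black box, so there is no internal proof to compare against. Your argument is a correct, self-contained derivation. I checked each block: differentiating $FF^{\dag}F=F$ and sandwiching with $F^{\dag}$ does give $P_{R}\,\mathrm{d}F^{\dag}P_{L}=-F^{\dag}\mathrm{d}F F^{\dag}$; differentiating $F^{\dag}=P_{R}F^{\dag}$ and using $F^{\dag}(I-P_{L})=0$ kills the $(I-P_{R})(\cdot)(I-P_{L})$ block; and differentiating the symmetry identities $(FF^{\dag})^{T}=FF^{\dag}$ and $(F^{\dag}F)^{T}=F^{\dag}F$, after multiplying by $F^{\dag}$ and projecting, leaves exactly $F^{\dag}F^{\dag^{T}}\mathrm{d}F^{T}(I-FF^{\dag})$ and $(I-F^{\dag}F)\mathrm{d}F^{T}F^{\dag^{T}}F^{\dag}$, using the absorption relations $F^{T}(I-P_{L})=0$ and $(I-P_{R})F^{T}=0$. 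The four pieces sum to \eqref{2.10}, and your observation that only the symmetry identities produce the $\mathrm{d}F^{T}$ factors is exactly the right structural point. The one place I would tighten the write-up is the regularity step: the genuine SVD is \emph{not} smooth in the entries where singular values coalesce, so rather than a ``smooth SVD-type factorization'' you should invoke a local smooth full-rank factorization $F=GH$ with $G$ of full column rank $r$ and $H$ of full row rank $r$ (obtainable under the constant-rank hypothesis by selecting a fixed set of $r$ independent columns near the base point), from which $F^{\dag}=H^{T}(HH^{T})^{-1}(G^{T}G)^{-1}G^{T}$ visibly inherits $C^{k}$ smoothness; your remark that constant rank is essential (otherwise $F^{\dag}$ is not even continuous) is correct and is precisely the hypothesis the cited theorem needs.
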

\begin{lemma}\label{lem2.4}{\rm (\cite{[23]})}
For any matrices $U,$ $V,$ $C,$ $D,$ $R$ and $S$ with suitable dimensions such that

$$[U\otimes V+(C\otimes D)\Pi]{\rm {vec}}(R),$$
$$\frac{[U\otimes V+(C\otimes D)\Pi]{\rm {vec}}(R)}{S},$$
$$VRU^{T}  and \  DR^{T}C^{T},$$ \\
are well-defined, we have
$$\left\||[U\otimes V+(C\otimes D)\Pi]|{\rm {vec}}(|R|)\right\|_{\infty}\leq \left\|{\rm {vec}}(|V||R||U|^{T}+|D||R|^{T}|C|^{T})\right\|_{\infty}$$
and
$$\left\|\frac{|[U\otimes V+(C\otimes D)\Pi]|{\rm {vec}}(|R|)}{|S|}\right\|_{\infty}\leq \left\|\frac{{\rm {vec}}(|V||R||U|^{T}+|D||R|^{T}|C|^{T})}{|S|}\right\|_{\infty}.$$
\end{lemma}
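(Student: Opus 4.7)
The plan is to treat the two inequalities together: first establish an entrywise (vector) inequality between the nonnegative vectors $|[U\otimes V+(C\otimes D)\Pi]|\,\mathrm{vec}(|R|)$ and $\mathrm{vec}(|V||R||U|^{T}+|D||R|^{T}|C|^{T})$, and then pass to norms. Once the entrywise inequality is in hand, the $\infty$-norm bound and the bound after entry-wise division by $|S|$ are immediate, because entry-wise division by a fixed nonnegative vector preserves inequalities between nonnegative vectors.

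First I would apply the triangle inequality at the matrix level:
\begin{align*}
|U\otimes V+(C\otimes D)\Pi|\ \le\ |U\otimes V|+|(C\otimes D)\Pi|.
\end{align*}
Next, using the fact that every entry of a Kronecker product is a product of entries of the factors, we have $|U\otimes V|=|U|\otimes |V|$ and $|C\otimes D|=|C|\otimes |D|$. Since $\Pi$ is a permutation matrix, all its entries are $0$ or $1$, so $|\Pi|=\Pi$ and $|(C\otimes D)\Pi|=(|C|\otimes |D|)\,\Pi$. Multiplying the resulting entrywise matrix inequality on the right by the nonnegative vector $\mathrm{vec}(|R|)$ preserves the inequality, yielding
\begin{align*}
|[U\otimes V+(C\otimes D)\Pi]|\,\mathrm{vec}(|R|)\ \le\ (|U|\otimes |V|)\,\mathrm{vec}(|R|)+(|C|\otimes |D|)\,\Pi\,\mathrm{vec}(|R|).
\end{align*}

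Then I would translate the right-hand side back to $\mathrm{vec}$ form via the standard identities from the preliminaries. Identity \eqref{2.2} gives $(|U|\otimes |V|)\,\mathrm{vec}(|R|)=\mathrm{vec}(|V||R||U|^{T})$, while \eqref{2.3} (applied to $|R|$, so $\Pi\,\mathrm{vec}(|R|)=\mathrm{vec}(|R|^{T})$) combined with \eqref{2.2} gives $(|C|\otimes |D|)\,\Pi\,\mathrm{vec}(|R|)=\mathrm{vec}(|D||R|^{T}|C|^{T})$. Summing and using the linearity of $\mathrm{vec}$ produces the entrywise bound
\begin{align*}
|[U\otimes V+(C\otimes D)\Pi]|\,\mathrm{vec}(|R|)\ \le\ \mathrm{vec}\bigl(|V||R||U|^{T}+|D||R|^{T}|C|^{T}\bigr),
\end{align*}
and both sides are nonnegative vectors. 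Taking $\|\cdot\|_{\infty}$ yields the first inequality; dividing each coordinate by the corresponding coordinate of $|S|$ (which is legitimate under the well-definedness hypothesis and uses the convention in \eqref{2.56}) and then taking $\|\cdot\|_{\infty}$ gives the second.

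The only delicate point is bookkeeping: one must verify that the triangle inequality step, the identity $|A\otimes B|=|A|\otimes |B|$, and the permutation identity $|\Pi|=\Pi$ align with the correct dimensions so that \eqref{2.2} and \eqref{2.3} can be applied with the appropriate vec-permutation matrix $\Pi$; this is what the ``suitable dimensions'' and ``well-defined'' hypotheses guarantee. I do not expect any genuine technical obstacle beyond this careful accounting, since everything reduces to entrywise manipulations on nonnegative vectors.
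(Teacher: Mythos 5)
Your proof is correct, and it is essentially the standard argument behind this lemma (the paper itself states it as a citation to \cite{[23]} without reproducing a proof): entrywise triangle inequality, the identities $|U\otimes V|=|U|\otimes|V|$ and $|(C\otimes D)\Pi|=(|C|\otimes|D|)\Pi$, right-multiplication by the nonnegative vector ${\rm vec}(|R|)$, the vec identities \eqref{2.2}--\eqref{2.3}, and monotonicity of $\|\cdot\|_{\infty}$ under the nonnegative diagonal scaling ${\rm diag}(|S|^{\ddagger})$. No gaps.
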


\vspace{-6pt}
\section{Condition numbers}
\label{sec.3}
As done in \cite{[24]}, we first give the definitions of the normwise, mixed, and componentwise condition numbers for $C^{\ddagger}_{A}$:
\begin{align}
n^{\ddagger}(A,C)&:=\lim_{\varepsilon \rightarrow 0}\sup_{\|[\Delta A , \ \Delta C]\|_{F} \leq \varepsilon \|[A ,\ C]\|_{F}}\frac{\|
(C+\Delta C)^{\ddagger}_{A}- C^{\ddagger}_{A}\|_{F}/ \|C^{\ddagger}_{A}\|_{F}}{\|[\Delta A , \ \Delta C]\|_{F}/\|[A ,\ C]\|_{F}}, \label{3.2}\\
  m^{\ddagger}(A,C)&:=\lim_{\varepsilon \rightarrow 0}\sup_{\substack{\|\Delta A /A \|_{\max}\leq \varepsilon \\\|\Delta C/C\|_{\max} \leq \varepsilon }}\frac{\|
(C+\Delta C)^{\ddagger}_{A} -C^{\ddagger}_{A}\|_{\max}}{ \|C^{\ddagger}_{A}\|_{\max}}\frac{1}{d(v+\Delta v, v)},\label{3.3}\\
  c^{\ddagger}(A,C)&:=\lim_{\varepsilon \rightarrow 0}\sup_{\substack{\|\Delta A /A\|_{\max}  \leq \varepsilon\\ \|\Delta C/C\|_{\max}\leq \varepsilon }}\frac{1}{d(v+\Delta v, v)}\left\|\frac{(C+\Delta C)^{\ddagger}_{A}- C^{\ddagger}_{A}}{C^{\ddagger}_{A}}\right\|_{\max}\label{3.4}.
\end{align}
Here, $v=({\rm {vec}}(K)^{T},{\rm {vec}}(L)^{T})^{T}$, $\Delta v=({\rm {vec}}(\Delta K)^{T},{\rm {vec}}(\Delta L)^{T})^{T}$, and for a matrix $A=(a_{ij})$,
$\|A\|_{\max}=\|{\rm vec}(A)\|_{\infty}=\mathop {\max }\limits_{i, j} |a_{ij}|.$

Using the definitions of the operator ${\rm vec}$, and the spectral, Frobenius and Max norms, we can rewrite the above definitions 
as follows:
\begin{align}
n^{\ddagger}(A,C)&:=\lim_{\varepsilon \rightarrow 0}\sup_{\left\|\left[\begin{array}{c} {\rm {vec}}(\Delta A) \\
                                                                  {\rm {vec}}( \Delta C )\\
                                                                   \end{array}
                                                                 \right] \right\|_{2}\leq \varepsilon \left\| \left[\begin{array}{c}{\rm {vec}}(A) \\
                                                                    {\rm {vec}}(C) \\
                                                                   \end{array}
                                                                 \right] \right\|_{2}}\frac{ \|{\rm {vec}}(
(C+\Delta C)^{\ddagger}_{A}- C^{\ddagger}_{A})\|_{2}}{\|{\rm {vec}}(C^{\ddagger}_{A})\|_{2}} \Big/ \small\frac{\left\|\left[\begin{array}{c} {\rm {vec}}(\Delta A) \\
                                                                  {\rm {vec}}( \Delta C )\\
                                                                   \end{array}
                                                                 \right] \right\|_{2}}{\left\| \left[\begin{array}{c}{\rm {vec}}( A) \\
                                                                    {\rm {vec}}(C) \\
                                                                   \end{array}
                                                                 \right] \right\|_{2}}, \label{361}\\
  m^{\ddagger}(A,C)&:=\lim_{\varepsilon \rightarrow 0}\sup_{\substack{\|{\rm {vec}}(\Delta A)/ {\rm {vec}}(A) \|_{\infty}\leq \varepsilon \\\|{\rm {vec}}(\Delta C) / {\rm {vec}}(C)\|_{\infty} \leq \varepsilon }}\frac{\|{\rm {vec}}(
(C+\Delta C)^{\ddagger}_{A} -C^{\ddagger}_{A})\|_{\infty}}{\|{\rm {vec}}(C^{\ddagger}_{A})\|_{\infty}}\frac{1}{d(v+\Delta v, v)},\label{362}\\
  c^{\ddagger}(A,C)&:=\lim_{\varepsilon \rightarrow 0}\sup_{\substack{\|{\rm {vec}}(\Delta A) / {\rm {vec}}(A)\|_{\infty}  \leq \varepsilon\\ \|{\rm {vec}} (\Delta C) / {\rm {vec}}(C)\|_{\infty}\leq \varepsilon }}\frac{1}{ d(v+\Delta v, v)}\left\|\frac{{\rm {vec}} ((C+\Delta C)^{\ddagger}_{A}- C^{\ddagger}_{A})}{{\rm {vec}} (C^{\ddagger}_{A})}\right\|_{\infty}\label{363}.
\end{align}
 Thus, if we 
define a mapping $\phi:\mathbb{R}^{mn+sn}\rightarrow\mathbb{R}^{ns}$ by
\begin{align}\label{3.1}
 \phi(v)={\rm{vec}}( C^{\ddagger}_{A}),
\end{align}
we have
$$n^{\ddagger}(A,C)=n(\phi, v),\ m^{\ddagger}(A,C)=m(\phi, v),\ c^{\ddagger}(A,C)=c(\phi, v).$$
In the following, we find the expression of Fr\'{e}chet derivative of $\phi$ at $v$.
\begin{lemma}\label{lem3.1}
 The mapping $\phi$ is continuous, Fr$\acute{e}$chet differentiable at $v$, and
 $$\mathrm{d}\phi(v)=[W(A), W(C)],$$
 where
 \begin{align*}
  W(A)&= -[(C^{\ddagger^{T}}_{A} \otimes (PQP)^{\dag}A^{T}J)+((JAC^{\ddagger}_{A})^{T}\otimes (PQP)^{\dag})\Pi_{mn}],\\
 W(C)&= -[(C^{\ddagger^{T}}_{A}\otimes C^{\ddagger}_{A} )-((I-CC^{\dag})^{T}\otimes C^{\ddagger}_{A}C^{\dag^{T}})\Pi_{sn}-(C^{\dag^{T}}QC^{\ddagger}_{A})^{T}\otimes (PQP)^{\dag})\Pi_{sn}],
 \end{align*}

\end{lemma}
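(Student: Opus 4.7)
The plan is to compute $\mathrm{d}\phi$ by differentiating the factorization $C^{\ddagger}_A = C^{\dagger} - (PQP)^{\dagger}QC^{\dagger}$ via the product rule, treating the perturbation of $A$ (which affects only $Q = A^TJA$) and the perturbation of $C$ (which affects both $P = I - C^{\dagger}C$ and $C^{\dagger}$) separately, then vectorizing via (\ref{2.2})--(\ref{2.7}). Set $F := PQP$. Because $F$ is symmetric, $Q$ is positive definite, and $P$ is the orthogonal projector onto $\ker(C)$, we have $\mathrm{range}(F) = \mathrm{range}(P)$, and consequently
\begin{equation*}
FF^{\dagger} = F^{\dagger}F = P,\quad PF^{\dagger} = F^{\dagger}P = F^{\dagger},\quad F^{\dagger}QP = P,\quad PC^{\dagger} = 0,\quad F^{\dagger}C^{\dagger} = 0,\quad CF^{\dagger} = 0.
\end{equation*}
These identities, together with the consequence $C^{\dagger^T}C^{\dagger}C = C^{\dagger^T}$ of the Moore--Penrose axiom $C^{\dagger}CC^{\dagger} = C^{\dagger}$, are what will collapse the many terms produced by Lemma \ref{lem2.5}.

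For the $A$-contribution, $\mathrm{d}F|_A = P(\mathrm{d}Q)P$. In Lemma \ref{lem2.5}, the two symmetric correction terms each carry the factor $C^{\dagger}C\cdot P = 0$ and vanish, while the first term simplifies through $F^{\dagger}P = PF^{\dagger} = F^{\dagger}$, yielding $\mathrm{d}F^{\dagger}|_A = -F^{\dagger}(\mathrm{d}Q)F^{\dagger}$. The product rule then gives $\mathrm{d}C^{\ddagger}_A|_A = -(PQP)^{\dagger}(\mathrm{d}Q)C^{\ddagger}_A$ after using $(I - F^{\dagger}Q)C^{\dagger} = C^{\ddagger}_A$. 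Expanding $\mathrm{d}Q = (\mathrm{d}A)^T J A + A^T J (\mathrm{d}A)$ and applying the Kronecker identities together with $\mathrm{vec}((\mathrm{d}A)^T) = \Pi_{mn}\mathrm{vec}(\mathrm{d}A)$ reproduces precisely $W(A)$.

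For the $C$-contribution, the product rule gives $\mathrm{d}C^{\ddagger}_A|_C = (I - F^{\dagger}Q)\mathrm{d}C^{\dagger} - (\mathrm{d}F^{\dagger}|_C)QC^{\dagger}$. Substituting $\mathrm{d}C^{\dagger}$ from Lemma \ref{lem2.5}, the third Moore--Penrose term vanishes through the prefactor $(I - F^{\dagger}Q)P = P - F^{\dagger}QP = 0$, leaving $(I - F^{\dagger}Q)\mathrm{d}C^{\dagger} = -C^{\ddagger}_A(\mathrm{d}C)C^{\dagger} + C^{\ddagger}_A C^{\dagger^T}(\mathrm{d}C)^T(I - CC^{\dagger})$. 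For $\mathrm{d}F^{\dagger}|_C$ one uses $\mathrm{d}F|_C = (\mathrm{d}P)QP + PQ(\mathrm{d}P)$ with $\mathrm{d}P = -(\mathrm{d}C^{\dagger})C - C^{\dagger}(\mathrm{d}C)$. The identities $F^{\dagger}C^{\dagger} = 0$ and $CF^{\dagger} = 0$ collapse $F^{\dagger}(\mathrm{d}P)$ and $(\mathrm{d}P)F^{\dagger}$ to $-F^{\dagger}(\mathrm{d}C)^T C^{\dagger^T}$ (here $C^{\dagger^T}C^{\dagger}C = C^{\dagger^T}$ is essential) and $-C^{\dagger}(\mathrm{d}C)F^{\dagger}$ respectively, and assembling the three Lemma \ref{lem2.5} pieces then reduces $-(\mathrm{d}F^{\dagger}|_C)QC^{\dagger}$ to $C^{\ddagger}_A(\mathrm{d}C)F^{\dagger}QC^{\dagger} + F^{\dagger}(\mathrm{d}C)^T C^{\dagger^T}QC^{\ddagger}_A$. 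Combining the two contributions and using $C^{\dagger} - F^{\dagger}QC^{\dagger} = C^{\ddagger}_A$ to merge the two $(\mathrm{d}C)$-terms into $-C^{\ddagger}_A(\mathrm{d}C)C^{\ddagger}_A$ yields
\begin{equation*}
\mathrm{d}C^{\ddagger}_A|_C = -C^{\ddagger}_A(\mathrm{d}C)C^{\ddagger}_A + C^{\ddagger}_A C^{\dagger^T}(\mathrm{d}C)^T(I - CC^{\dagger}) + (PQP)^{\dagger}(\mathrm{d}C)^T C^{\dagger^T} Q C^{\ddagger}_A,
\end{equation*}
whose vectorization via (\ref{2.2})--(\ref{2.7}), with each $(\mathrm{d}C)^T$ absorbed by $\Pi_{sn}$, is exactly $W(C)\mathrm{vec}(\mathrm{d}C)$.

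The main obstacle is this $C$-part: a naive expansion of two nested applications of Lemma \ref{lem2.5} (once to $C^{\dagger}$ and once to $F$, with $\mathrm{d}P$ itself depending on $\mathrm{d}C^{\dagger}$) produces a proliferation of terms, and fitting them into just three Kronecker pieces requires disciplined use of the auxiliary identities above, especially $F^{\dagger}C^{\dagger} = CF^{\dagger} = 0$ (obtained from $PC^{\dagger} = 0$ combined with $F^{\dagger} = F^{\dagger}P = PF^{\dagger}$) and $C^{\dagger^T}C^{\dagger}C = C^{\dagger^T}$. Once the closed-form $\mathrm{d}C^{\ddagger}_A$ is in hand, the final vectorization is routine.
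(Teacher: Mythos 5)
Your proposal is correct and follows essentially the same route as the paper: differentiate the factorization $C^{\ddagger}_{A}=(I-(PQP)^{\dag}Q)C^{\dag}$ with the product rule and Lemma \ref{lem2.5}, collapse the resulting terms with the projector identities $(PQP)^{\dag}=P(PQP)^{\dag}=(PQP)^{\dag}P$, $(PQP)^{\dag}QP=P$, $PC^{\dag}=0$, $C(PQP)^{\dag}=0$, arrive at the same closed form $\mathrm{d}(C^{\ddagger}_{A})=-C^{\ddagger}_{A}\mathrm{d}CC^{\ddagger}_{A}+C^{\ddagger}_{A}C^{\dag^{T}}\mathrm{d}C^{T}(I-CC^{\dag})+(PQP)^{\dag}\mathrm{d}C^{T}C^{\dag^{T}}QC^{\ddagger}_{A}-(PQP)^{\dag}A^{T}J\mathrm{d}AC^{\ddagger}_{A}-(PQP)^{\dag}\mathrm{d}A^{T}JAC^{\ddagger}_{A}$, and then vectorize via \eqref{2.2}--\eqref{2.3}. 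Your only departures are organizational (treating the $A$- and $C$-contributions separately and deriving the auxiliary identities from $\mathrm{range}(PQP)=\mathrm{range}(P)$ rather than citing them), and your intermediate reductions check out.
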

\begin{proof}
Differentiating both sides of \eqref{1.1}, we get
\begin{eqnarray}\label{5001}
\mathrm{d}(C^{\ddagger}_{A})&=& \mathrm{d}[(I-(PQP)^{\dag}Q)C^{\dag}].
\end{eqnarray}
From \cite[Lemma 2.3]{[1]}, we have
\begin{align}\label{3001}
&(PQP)^{\dag}=P(PQP)^{\dag}=(PQP)^{\dag}P=P(PQP)^{\dag}P,\\
&P(I-(PQP)^{\dag}QP)=0,\,\ (PQP)^{\dag}QP=P.\label{3002}
\end{align}
Thus, substituting \eqref{3001} into \eqref{5001} and using some basic knowledge on differential give
\begin{align*}
\mathrm{d}(C^{\ddagger}_{A})&=\mathrm{d}[(I-P(PQP)^{\dag}Q)C^{\dag}]=\mathrm{d}C^{\dag}- \mathrm{d}(P(PQP)^{\dag}QC^{\dag})\\
&=(I-P(PQP)^{\dag}Q)\mathrm{d}C^{\dag}-\mathrm{d}P(PQP)^{\dag}QC^{\dag}-P\mathrm{d}(PQP)^{\dag}QC^{\dag}-P(PQP)^{\dag}\mathrm{d}QC^{\dag}.
\end{align*}
Further, using \eqref{2.10},  we have
\begin{align*}
\mathrm{d}(C^{\ddagger}_{A})&=(I-P(PQP)^{\dag}Q)[-C^{\dag}\mathrm{d}CC^{\dag}+C^{\dag}C^{\dag^{T}}\mathrm{d}C^{T}(I-CC^{\dag})+(I-C^{\dag}C)\mathrm{d}C^{T}C^{\dag^{T}}C^{\dag}]\nonumber\\
&\quad-\mathrm{d}(I-C^{\dag}C)(PQP)^{\dag}QC^{\dag}+P[(PQP)^{\dag}\mathrm{d}(PQP)(PQP)^{\dag}\nonumber\\
&\quad-(PQP)^{\dag}(PQP)^{\dag^{T}}\mathrm{d}(PQP)^{T}(I-(PQP)(PQP)^{\dag})\nonumber\\
&\quad-(I-(PQP)^{\dag}(PQP))\mathrm{d}(PQP)^{T}(PQP)^{\dag^{T}}(PQP)^{\dag}]QC^{\dag}-P(PQP)^{\dag}\mathrm{d}QC^{\dag}. 
\end{align*}
Noting  \eqref{3001}, \eqref{1.1}, and the result $(I-P(PQP)^{\dag}Q)(I-C^{\dag}C)=P(I-(PQP)^{\dag}QP)$, the above equation can be rewritten as
\begin{align*}
\mathrm{d}(C^{\ddagger}_{A})&=-C^{\ddagger}_{A}\mathrm{d}CC^{\dag}+C^{\ddagger}_{A}C^{\dag^{T}}\mathrm{d}C^{T}(I-CC^{\dag})
+P(I-(PQP)^{\dag}QP)\mathrm{d}C^{T}C^{\dag^{T}}C^{\dag}+\mathrm{d}C^{\dag}C(PQP)^{\dag}QC^{\dag}\nonumber\\
&\quad+C^{\dag}\mathrm{d}C(PQP)^{\dag}QC^{\dag}+(PQP)^{\dag}\mathrm{d}(PQP)(PQP)^{\dag}QC^{\dag}\nonumber\\
&\quad-(PQP)^{\dag}(PQP)^{\dag^{T}}\mathrm{d}(PQP)^{T}(I-(PQP)(PQP)^{\dag})QC^{\dag}\nonumber\\
&\quad -P(I-(PQP)^{\dag}QP)\mathrm{d}(PQP)^{T}(PQP)^{\dag^{T}}(PQP)^{\dag}QC^{\dag}-(PQP)^{\dag}\mathrm{d}QC^{\dag}.\nonumber
\end{align*}
Further, by the fact $PQ=(QP)^{T}=QP$, \eqref{3002}, and
\begin{align}\label{3003}
 CP(PQP)^{\dag}&= C(PQP)^{\dag}=0,
\end{align}
we can simplify the above equation as
\begin{align}
\mathrm{d}(C^{\ddagger}_{A})&=-C^{\ddagger}_{K}\mathrm{d}CC^{\dag}+C^{\ddagger}_{A}C^{\dag^{T}}\mathrm{d}C^{T}(I-CC^{\dag})+C^{\dag}\mathrm{d}C(PQP)^{\dag}QC^{\dag}
-(PQP)^{\dag}\mathrm{d}QC^{\dag}\nonumber\\
&\quad
+(PQP)^{\dag}\mathrm{d}QP(PQP)^{\dag}QC^{\dag}+(PQP)^{\dag}Q\mathrm{d}P(PQP)^{\dag}QC^{\dag}\nonumber\\
&\quad-(PQP)^{\dag}(PQP)^{\dag^{T}}\mathrm{d}Q^{T}P^{T}(I-(PQP)(PQP)^{\dag})QC^{\dag}\nonumber\\
&\quad-(PQP)^{\dag}(PQP)^{\dag^{T}}Q^{T}\mathrm{d}P^{T}(I-(PQP)(PQP)^{\dag})QC^{\dag}. \\
&=-C^{\ddagger}_{K}\mathrm{d}CC^{\dag}+C^{\ddagger}_{A}C^{\dag^{T}}\mathrm{d}C^{T}(I-CC^{\dag})+C^{\dag}\mathrm{d}C(PQP)^{\dag}QC^{\dag}
-(PQP)^{\dag}\mathrm{d}A^{T}JAC^{\dag}\nonumber\\
&\quad
-(PQP)^{\dag}A^{T}J\mathrm{d}AC^{\dag}+(PQP)^{\dag}P\mathrm{d}A^{T}JA(PQP)^{\dag}QC^{\dag}+(PQP)^{\dag}PA^{T}J\mathrm{d}A(PQP)^{\dag}QC^{\dag}\nonumber\\
&\quad-(PQP)^{\dag}(PQP)^{\dag^{T}}\mathrm{d}Q^{T}P(I-QP(PQP)^{\dag})QC^{\dag}+(PQP)^{\dag}Q\mathrm{d}P(PQP)^{\dag}QC^{\dag}\nonumber\\
&\quad-(PQP)^{\dag}(PQP)(PQP)^{\dag}\mathrm{d}P^{T}(I-QP(PQP)^{\dag})QC^{\dag}.  \label{3004}
\end{align}
Considering $PQ=(QP)^{T}=QP$  we get
\begin{align}
P((I- QP(PQP)^{\dag})=0 \,\,\,\,\,\         QP(PQP)^{\dag}=P \label{5003}
\end{align}

Substituting this fact into \eqref{3004}  implies
\begin{align*}
\mathrm{d}(C^{\ddagger}_{A})&=-C^{\ddagger}_{A}\mathrm{d}CC^{\dag}+C^{\ddagger}_{A}C^{\dag^{T}}\mathrm{d}C^{T}(I-CC^{\dag})
+C^{\dag}\mathrm{d}C(PQP)^{\dag}QC^{\dag}\nonumber\\
&\quad-(PQP)^{\dag}A^{T}J\mathrm{d}A(I-P(PQP)^{\dag}Q)C^{\dag}-(PQP)^{\dag}QC^{\dag}\mathrm{d}C(PQP)^{\dag}QC^{\dag}\nonumber\\
&\quad-(PQP)^{\dag}\mathrm{d}A^{T}JA(I-(PQP)^{\dag}Q)C^{\dag}+(PQP)^{\dag}\mathrm{d}C^{T}C^{\dag^{T}}Q(I-(PQP)^{\dag}Q)C^{\dag}.
\end{align*}
Using \eqref{1.1} and \eqref{3001} again, we can rewrite the above equation as
\begin{align}
\mathrm{d}(C^{\ddagger}_{A})&=-C^{\ddagger}_{A}\mathrm{d}CC^{\ddagger}_{A}+C^{\ddagger}_{A}C^{\dag^{T}}\mathrm{d}C^{T}(I-CC^{\dag})
+(PQP)^{\dag}\mathrm{d}C^{T}C^{\dag^{T}}QC^{\ddagger}_{A}-(PQP)^{\dag} A^{T}J \mathrm{d}AC^{\ddagger}_{A}\nonumber\\
&\quad- (PQP)^{\dag}\mathrm{d}A^{T}JAC^{\ddagger}_{A}.\label{3.5}
\end{align}
Applying the `vec' operation on the both sides of \eqref{3.5}, and noting \eqref{2.2} and \eqref{2.3}, we obtain
\begin{align*}
{\rm {vec}}(\mathrm{d}(C^{\ddagger}_{A}))&= -(C^{\ddagger^{T}}_{A}\otimes (PQP)^{\dag}A^{T}J){\rm {vec}}(\mathrm{d}A)-((JAC^{\ddagger}_{A})^{T}\otimes (PQP)^{\dag}){\rm {vec}}(\mathrm{d}A^{T}) \\
&\quad-(C^{\ddagger^{T}}_{A}\otimes C^{\ddagger}_{A} ){\rm {vec}}(\mathrm{d}C)+((I-CC^{\dag})^{T}\otimes C^{\ddagger}_{A}C^{\dag^{T}}){\rm{vec}}(\mathrm{d}C^{T})\nonumber\\
&\quad+((C^{\dag^{T}}QC^{\ddagger}_{A})^{T}\otimes (PQP)^{\dag}){\rm {vec}}(\mathrm{d}C^{T}) \quad \textrm{ by \eqref{2.2}}\\
&=-[(C^{\ddagger^{T}}_{A} \otimes (PQP)^{\dag}A^{T}J)+((JAC^{\ddagger}_{A})^{T}\otimes (PQP)^{\dag})\Pi_{mn}]{\rm {vec}}(\mathrm{d}A)\\
&\quad-[(C^{\ddagger^{T}}_{A}\otimes C^{\ddagger}_{A} )-((I-CC^{\dag})^{T}\otimes C^{\ddagger}_{A}C^{\dag^{T}})\Pi_{sn}-(C^{\dag^{T}}QC^{\ddagger}_{A})^{T}\otimes (PQP)^{\dag})\Pi_{sn}]{\rm {vec}}(\mathrm{d}C) \\
&\quad \quad \textrm{ by \eqref{2.3}} \\
&=[-(C^{\ddagger^{T}}_{A} \otimes (PQP)^{\dag}A^{T}J)-((JAC^{\ddagger}_{A})^{T}\otimes (PQP)^{\dag})\Pi_{mn}, \\
&\quad -(C^{\ddagger^{T}}_{A}\otimes C^{\ddagger}_{A} )+((I-CC^{\dag})^{T}\otimes C^{\ddagger}_{A}C^{\dag^{T}})\Pi_{sn}+(C^{\dag^{T}}QC^{\ddagger}_{A})^{T}\otimes (PQP)^{\dag})\Pi_{sn}]\begin{bmatrix}
                                                              {\rm {vec}}(\mathrm{d}A) \\
                                                              {\rm {vec}}(\mathrm{d}C) \\
                                                            \end{bmatrix}.
\end{align*}
That is,
$$\mathrm{d}({\rm {vec}}(C^{\ddagger}_{A}))=[W(A),W(C)]\mathrm{d}v.$$
From the definition of Fr\'{e}chet derivative, we have the desired results.
\end{proof}

\begin{remark}\label{rem46}
{\rm The results in Lemma \ref{lem3.1} are for the case that $C$ may not have full rank. If we suppose $C$ is  full row rank  and $q=0$ than we have
$C^{\ddagger}_{A}=C^{\dag}_{A}$ and
\begin{align*}
 \widetilde{W}(A)&= -[(C^{\dag^{T}}_{A} \otimes (AP)^{\dag})+((AC^{\dag}_{A})^{T}\otimes (AP)^{\dag}(AP)^{\dag^{T}})\Pi_{mn}],\\
 \widetilde{W}(C)&= -[(C^{\dag^{T}}_{A}\otimes C^{\dag}_{A} )-((AC^{\dag})^{T}AC^{\dag}_{A})^{T}\otimes (AP)^{\dag}(AP)^{\dag^{T}})\Pi_{sn}].
 \end{align*}
 which is the result in \cite[Lemma 3.1]{[72]} with which we can recover the condition numbers for A-weighted pseudoinverse  \cite{[72]}.
  }
\end{remark}

\begin{remark}\label{rem477}
{\rm Setting $A$ to be the zero matrix and $C$ is  full column rank, we have $C^{\ddagger}_{0}=C^{\dag}$ and
\begin{eqnarray}\label{3.6}
W(A)=0,\ W(C)=[-((C^{\dag})^{T}\otimes C^{\dag})+ ((I-CC^{\dag})\otimes(C^{T}C)^{-1})\Pi_{sn}],
\end{eqnarray}
which is the result in \cite[Lemma 4]{[23]} with which we can recover the condition numbers for Moore-Penrose inverse \cite{[23]}. 
}
\end{remark}

In the following, we present the expressions for normwise, mixed and componentwise condition numbers for $C^{\ddagger}_{A},$ which are the immediate results of  Lemma \ref{lem2.3} and Lemma \ref{lem3.1}.
\begin{theorem}\label{thm3.2}
The normwise, mixed and componentwise condition numbers for $C^{\ddagger}_{A}$ defined in \eqref{3.2}-\eqref{3.4} are
\begin{eqnarray}
  n^{\ddagger} (A,C)\!\!\!&=&\!\!\! \frac{\|[W(A),W(C)]\|_{2}\left\|\begin{bmatrix}
{\rm {vec}}(K)\\
 {\rm {vec}}(L)
\end{bmatrix}\right\|_{2} }{\|{\rm {vec}}(C^{\ddagger}_{A})\|_{2}},\label{311}\\
  m^{\ddagger} (A,C) \!\!\!&=&\!\!\!\frac{\left\||W(A)|{\rm {vec}}(|A|)+|W(C)|{\rm {vec}}(|C|)\right\|_{\infty}}{\|{\rm {vec}}(C^{\ddagger}_{A})\|_{\infty}},\label{312}\\
  c^{\ddagger} (A,C) \!\!\!&=&\!\!\!\left\|\frac{|W(A)|{\rm {vec}}(|A|)+|W(C)|{\rm {vec}}(|C|)}{{\rm {vec}}(C^{\ddagger}_{A})}\right\|_{\infty}\label{313}.
\end{eqnarray}
\end{theorem}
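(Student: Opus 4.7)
The plan is to derive \eqref{311}--\eqref{313} directly by specializing Lemma~\ref{lem2.3} to the mapping $\phi(v) = \mathrm{vec}(C^{\ddagger}_A)$ defined in \eqref{3.1}, using the Fr\'echet derivative computed in Lemma~\ref{lem3.1}. The reformulation carried out between \eqref{361} and \eqref{363} already yields
\[
n^{\ddagger}(A,C) = n(\phi, v), \qquad m^{\ddagger}(A,C) = m(\phi, v), \qquad c^{\ddagger}(A,C) = c(\phi, v),
\]
so the entire theorem reduces to applying Lemma~\ref{lem2.3} to $\phi$ at $v$. The hypotheses of Definition~\ref{dfn2.1} are met because the standing assumptions on $A$ and $C$ ensure that $C^{\ddagger}_A$ is well-defined and may be assumed nonzero (otherwise the condition numbers are trivially zero or undefined), while Fr\'echet differentiability is supplied by Lemma~\ref{lem3.1}.

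Once the specialization is set up, I substitute $\mathrm{d}\phi(v) = [W(A), W(C)]$ into the three formulas of Lemma~\ref{lem2.3}. The normwise case is immediate: $\|\mathrm{d}\phi(v)\|_2 = \|[W(A),W(C)]\|_2$ and $\|v\|_2 = \bigl\|[\mathrm{vec}(A)^T,\mathrm{vec}(C)^T]^T\bigr\|_2$, which together with $\|\phi(v)\|_2 = \|\mathrm{vec}(C^{\ddagger}_A)\|_2$ produces \eqref{311}. For the mixed and componentwise expressions, the key step is the block decomposition
\[
|\mathrm{d}\phi(v)|\,|v| = \bigl[\,|W(A)|,\ |W(C)|\,\bigr]\begin{bmatrix}\mathrm{vec}(|A|) \\ \mathrm{vec}(|C|)\end{bmatrix} = |W(A)|\,\mathrm{vec}(|A|) + |W(C)|\,\mathrm{vec}(|C|),
\]
which holds because absolute value commutes with the block partition of $v$ and is entrywise on the block row matrix $\mathrm{d}\phi(v)$. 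Plugging this identity into the $\infty$-norm expression and the entrywise-division expression of Lemma~\ref{lem2.3} yields \eqref{312} and \eqref{313}, respectively.

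No serious obstacle is anticipated; the proof is essentially a substitution, and the only point requiring care is the block-matrix factoring that produces the additive form $|W(A)|\mathrm{vec}(|A|) + |W(C)|\mathrm{vec}(|C|)$ from $|\mathrm{d}\phi(v)|\,|v|$. I also observe that the notation $[\mathrm{vec}(K);\mathrm{vec}(L)]$ in \eqref{311} is most naturally read as shorthand for $[\mathrm{vec}(A);\mathrm{vec}(C)]$, consistent with the definition of $v$ used throughout the derivation; this identification is what makes the normwise expression match the general formula from Lemma~\ref{lem2.3}.
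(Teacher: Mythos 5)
Your proposal matches the paper's own argument: the authors state Theorem~\ref{thm3.2} as an immediate consequence of Lemma~\ref{lem2.3} applied to the mapping $\phi$ with the Fr\'echet derivative $[W(A),W(C)]$ from Lemma~\ref{lem3.1}, which is exactly the substitution you carry out, and your block-partition identity $|\mathrm{d}\phi(v)||v| = |W(A)|\mathrm{vec}(|A|) + |W(C)|\mathrm{vec}(|C|)$ fills in the one detail the paper leaves implicit. Your reading of $[\mathrm{vec}(K);\mathrm{vec}(L)]$ as $[\mathrm{vec}(A);\mathrm{vec}(C)]$ is also the correct interpretation of what is evidently a notational slip in the statement.
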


To reduce the cost for computing these condition numbers, we
provide easier computable upper bounds.  The numerical experiments in Section \ref{sec.7} show that these bounds are often very good.
\begin{corollary}\label{col33}
 The normwise, mixed and componentwise condition numbers for $C^{\ddagger}_{A}$ can be bounded as
\begin{align*}
n^{\ddagger}(A,C)&\leq n^{upper}(A,C)\\
&=[\|C^{\ddagger}_{A}\|_{2}\|(PQP)^{\dag}A^{T}J\|_{2}+\|JAC^{\ddagger}_{A}\|_{2}\|(PQP)^{\dag}\|_{2}
+\|C^{\ddagger}_{A}\|_{2}\|C^{\ddagger}_{A}\|_{2}+\|(I-CC^{\dag})\|_{2}\|C^{\ddagger}_{A}C^{\dag^{T}}\|_{2}\\
&\quad+\| C^{\dag^{T}}QC^{\ddagger}_{A}\|_{2}\| (PQP)^{\dag} \|_{2}] \frac{\|[A,\ C]\|_{F}}{\|C^{\ddagger}_{A}\|_{F}},\\
m^{\ddagger} (A,C) &\leq m^{upper}(A,C)\\
&=\|[|(PQP)^{\dag}A^{T}J||A||C^{\ddagger^{T}}_{A}|+|(PQP)^{\dag}||A^{T}||JAC^{\ddagger}_{A}|
+|C^{\ddagger}_{A}||C||C^{\ddagger^{T}}_{A}|+|C^{\ddagger}_{A}C^{\dag^{T}}||C^{T}||(I-CC^{\dag})|\\
&\quad+|(PQP)^{\dag}||C^{T}||C^{\dag^{T}}QC^{\ddagger}_{A}|]\|_{\max} /  \|C^{\ddagger}_{A}\|_{\max}, \\
c^{\ddagger} (A,C) &\leq c^{upper}(A,C)\\
&=\|[|(PQP)^{\dag}A^{T}J||A||C^{\ddagger^{T}}_{A}|+|(PQP)^{\dag}||A^{T}||JAC^{\ddagger}_{A}|
+|C^{\ddagger}_{A}||C||C^{\ddagger^{T}}_{A}|+|C^{\ddagger}_{A}C^{\dag^{T}}||C^{T}||(I-CC^{\dag})|\\
&\quad+|(PQP)^{\dag}||C^{T}||C^{\dag^{T}}QC^{\ddagger}_{A}|]/ C^{\ddagger}_{A}\|_{\max}.
\end{align*}
\end{corollary}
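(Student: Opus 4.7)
The plan is to start from the exact expressions given in Theorem \ref{thm3.2} and turn each of the three formulas into a computable upper bound by (a) applying the triangle inequality across the five Kronecker-product summands inside $[W(A),W(C)]$, and (b) using the norm identities for Kronecker products, permutation matrices, and the $\mathrm{vec}$ operator that are collected in Section \ref{sec.2}. Since $W(A)$ is the sum of one plain Kronecker term and one term postmultiplied by $\Pi_{mn}$, and $W(C)$ is the sum of one plain Kronecker term and two terms postmultiplied by $\Pi_{sn}$, the three bounds in Corollary \ref{col33} will arise by treating each of these five summands separately.

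For the normwise bound I would first observe that for a block row matrix $\|[W(A),W(C)]\|_{2}\leq\|W(A)\|_{2}+\|W(C)\|_{2}$. I would then apply the triangle inequality inside each block, and for every Kronecker summand invoke \eqref{2.4} together with $\|A^{T}\|_{2}=\|A\|_{2}$ and the fact that the vec-permutation matrices $\Pi_{mn}$ and $\Pi_{sn}$ are orthogonal, hence of spectral norm one. This yields five scalar products (for instance $\|C^{\ddagger}_{A}\|_{2}\|(PQP)^{\dag}A^{T}J\|_{2}$) whose sum bounds $\|[W(A),W(C)]\|_{2}$. The denominator is handled by $\|\mathrm{vec}(C^{\ddagger}_{A})\|_{2}=\|C^{\ddagger}_{A}\|_{F}$ and the identity $\bigl\|[\mathrm{vec}(A)^{T},\mathrm{vec}(C)^{T}]^{T}\bigr\|_{2}=\|[A,C]\|_{F}$, which together produce the stated $n^{upper}(A,C)$.

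For the mixed and componentwise bounds I would work directly on the vector $|W(A)|\,\mathrm{vec}(|A|)+|W(C)|\,\mathrm{vec}(|C|)$. The $W(A)$-part matches Lemma \ref{lem2.4} verbatim with the assignment $U=C^{\ddagger^{T}}_{A}$, $V=(PQP)^{\dag}A^{T}J$, the transposed-Kronecker factors $(JAC^{\ddagger}_{A})^{T}$ and $(PQP)^{\dag}$, $\Pi=\Pi_{mn}$, and $R=A$, which immediately produces the summands $|(PQP)^{\dag}A^{T}J|\,|A|\,|C^{\ddagger}_{A}|^{T}$ and $|(PQP)^{\dag}|\,|A|^{T}\,|JAC^{\ddagger}_{A}|$ inside a $\mathrm{vec}(\cdot)$. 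For the $W(C)$-part, which has one plain term and two $\Pi_{sn}$-terms, I would split using the triangle inequality and apply Lemma \ref{lem2.4} twice (pairing the plain term with each of the two permuted terms, or, equivalently, exploiting $|M\Pi_{sn}|=|M|\Pi_{sn}$ together with $\Pi_{sn}\mathrm{vec}(|C|)=\mathrm{vec}(|C|^{T})$ via \eqref{2.3} and $|A\otimes B|=|A|\otimes|B|$, followed by \eqref{2.2}). Summing the five vector contributions and taking $\|\cdot\|_{\infty}$ (respectively the weighted $\infty$-norm with entrywise division by $\mathrm{vec}(C^{\ddagger}_{A})$) yields $m^{upper}(A,C)$ and $c^{upper}(A,C)$.

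The main obstacle is purely bookkeeping: the five summands carry nested transposes, conjugations by $\Pi$, and the asymmetric roles of the two arguments in a Kronecker product, so the danger is in correctly identifying each role in Lemma \ref{lem2.4} and in ensuring that factors like $|C^{\ddagger^{T}}_{A}|^{T}=|C^{\ddagger}_{A}|$ are reassembled in the right order so that the resulting matrix products are conformable. Once these identifications are made consistently, the three displayed upper bounds follow by direct substitution and a final application of $\|\mathrm{vec}(X)\|_{\infty}=\|X\|_{\max}$.
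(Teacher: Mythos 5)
Your proposal matches the paper's proof essentially step for step: the normwise bound is obtained via $\|[W(A),W(C)]\|_{2}\leq\|W(A)\|_{2}+\|W(C)\|_{2}$, the triangle inequality over the Kronecker summands, the identity $\|A\otimes B\|_{2}=\|A\|_{2}\|B\|_{2}$ from \eqref{2.4} and the orthogonality of the vec-permutation matrices, while the mixed and componentwise bounds follow by applying Lemma \ref{lem2.4} to $|W(A)|\,{\rm vec}(|A|)+|W(C)|\,{\rm vec}(|C|)$ and dividing by $\|C^{\ddagger}_{A}\|_{\max}$ (respectively taking the entrywise quotient). Your explicit handling of the three-term $W(C)$ by two applications of Lemma \ref{lem2.4} is a detail the paper glosses over but does not alter the argument.
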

\begin{proof}
Firstly, using the property on the spectral norm that for the matrices $C$ and $D$ of suitable orders, $\|[C,\ D]\|_{2} \leq \|C\|_{2} + \|D\|_{2}$, Theorem \ref{thm3.2}, and \eqref{2.4}, we have
\begin{align*}
n^{\ddagger}(A,C)
&\leq [\|-(C^{\ddagger^{T}}_{A}\otimes (PQP)^{\dag}A^{T}J)-((JAC^{\ddagger}_{A})^{T}\otimes (PQP)^{\dag})\Pi_{mn}\|_{2}\\
&\quad+\|-(C^{\ddagger^{T}}_{A}\otimes C^{\ddagger}_{A} )+((I-CC^{\dag})^{T}\otimes C^{\ddagger}_{A}C^{\dag^{T}})\Pi_{sn}+(C^{\dag^{T}}QC^{\ddagger}_{A})^{T}\otimes (PQP)^{\dag})\Pi_{sn}\|_{2}]\\
&\quad \times \frac{\|[A,\ C]\|_{F}}{\|C^{\ddagger}_{A}\|_{F}}\\
&\leq[\|C^{\ddagger^{T}}_{A} \otimes (PQP)^{\dag}A^{T}J\|_{2}+\|(JAC^{\ddagger}_{A})^{T}\otimes (PQP)^{\dag}\|_{2}+\|C^{\ddagger^{T}}_{A}\otimes C^{\ddagger}_{A} \|_{2}\\&\quad+\|(I-CC^{\dag})^{T}\otimes C^{\ddagger}_{A}C^{\dag^{T}}\|_{2}+\|(C^{\dag^{T}}QC^{\ddagger}_{A})^{T}\otimes (PQP)^{\dag} \|_{2}]\frac{\|[A,\ C]\|_{F}}{\|C^{\ddagger}_{A}\|_{F}}\\
&=[\|C^{\ddagger}_{A}\|_{2}\|(PQP)^{\dag}A^{T}J\|_{2}+\|JAC^{\ddagger}_{A}\|_{2}\|(PQP)^{\dag}\|_{2}
+\|C^{\ddagger}_{A}\|_{2}\|C^{\ddagger}_{A}\|_{2}+\|(I-CC^{\dag})\|_{2}\|C^{\ddagger}_{A}C^{\dag^{T}}\|_{2}\\
&\quad+\| C^{\dag^{T}}QC^{\ddagger}_{A}\|_{2}\| (PQP)^{\dag} \|_{2}] \frac{\|[A,\ C]\|_{F}}{\|C^{\ddagger}_{A}\|_{F}}.
\end{align*}

Secondly, by Theorem \ref{thm3.2} and Lemma \ref{lem2.4}, we obtain
\begin{align*}
 m^{\ddagger}(A, C)&=\||-(C^{\ddagger^{T}}_{A}\otimes (PQP)^{\dag}A^{T}J)-((JAC^{\ddagger}_{A})^{T}\otimes (PQP)^{\dag})\Pi_{mn}|{\rm {vec}}(|A|)
 +|-(C^{\ddagger^{T}}_{A}\otimes C^{\ddagger}_{A} )\\
&\quad +((I-CC^{\dag})^{T}\otimes C^{\ddagger}_{A}C^{\dag^{T}})\Pi_{sn}+(C^{\dag^{T}}QC^{\ddagger}_{A})^{T}\otimes (PQP)^{\dag})\Pi_{sn}|{\rm {vec}}(|C|)\|_{\infty}/\|{\rm {vec}}(C^{\ddagger}_{A})\|_{\infty}\\
&\leq \|[|(PQP)^{\dag}A^{T}J||A||C^{\ddagger^{T}}_{A}|+|(PQP)^{\dag}||A^{T}||JAC^{\ddagger}_{A}|
+|C^{\ddagger}_{A}||C||C^{\ddagger^{T}}_{A}|+|C^{\ddagger}_{A}C^{\dag^{T}}||C^{T}||(I-CC^{\dag})|\\
&\quad+|(PQP)^{\dag}||C^{T}||C^{\dag^{T}}QC^{\ddagger}_{A}|]\|_{\max} /  \|C^{\ddagger}_{A}\|_{\max},
\end{align*}
and
\begin{align*}
 c^{\ddagger}(A, C)&=\||-(C^{\ddagger^{T}}_{A}\otimes (PQP)^{\dag}A^{T}J)-((JAC^{\ddagger}_{A})^{T}\otimes (PQP)^{\dag})\Pi_{mn}|{\rm {vec}}(|A|)
 +|-(C^{\ddagger^{T}}_{A}\otimes C^{\ddagger}_{A} )\\
&\quad +((I-CC^{\dag})^{T}\otimes C^{\ddagger}_{A}C^{\dag^{T}})\Pi_{sn}+(C^{\dag^{T}}QC^{\ddagger}_{A})^{T}\otimes (PQP)^{\dag})\Pi_{sn}|{\rm {vec}}(|C|)/|{\rm {vec}}(C^{\ddagger}_{A})|\|_{\infty}\\
&\leq\|[|(PQP)^{\dag}A^{T}J||A||C^{\ddagger^{T}}_{A}|+|(PQP)^{\dag}||A^{T}||JAC^{\ddagger}_{A}|
+|C^{\ddagger}_{A}||C||C^{\ddagger^{T}}_{A}|+|C^{\ddagger}_{A}C^{\dag^{T}}||C^{T}||(I-CC^{\dag})|\\
&\quad+|(PQP)^{\dag}||C^{T}||C^{\dag^{T}}QC^{\ddagger}_{A}|]/ C^{\ddagger}_{A}\|_{\max}.
\end{align*}
\end{proof}

For normwise condition number, we have an alternative form, which doesn't contain the Kronecker product.
\begin{theorem}\label{thm3.6}  The normwise condition number $n^{\ddagger}(A,C)$ for $C^{\ddagger}_{A}$ has the following
equivalent form
\begin{align}\label{31}
n^{\ddagger}(A,C)&= \frac{\|V\|^{1/2}_{2} \left\|\begin{bmatrix}
{\rm {vec}}(A)\\
 {\rm {vec}}(C)
\end{bmatrix}\right\|_{2} }{\|{\rm {vec}}(C^{\ddagger}_{A})\|_{2}},
\end{align}
where
\begin{align}\label{91}
V&=\left(\|JAC^{\ddagger}_{A}\|^{2}_{2}+\|C^{\dag^{T}}QC^{\ddagger}_{A}\|^{2}_{2}\right)  (PQP)^{\dag^{2}}+\|C^{\ddagger}_{A}\|^{2}_{2}
\left(\|(PQP)^{\dag}A^{T}J\|^{2}_{2}+ C^{\ddagger}_{A}C^{\ddagger^{T}}_{A}  \right)+\|I-CC^{\dag}\|^{2}_{2}\|C^{\ddagger}_{A}C^{\dag^{T}}\|^{2}_{2}.\nonumber\\
\end{align}
\end{theorem}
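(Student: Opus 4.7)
The approach is to eliminate the Kronecker products from the expression (\ref{311}) of Theorem \ref{thm3.2} by using the identity $\|M\|_{2}^{2}=\|MM^{T}\|_{2}$. Applied to the block matrix $M=[W(A),W(C)]$, this gives
$$\|[W(A),W(C)]\|_{2}^{2}=\|W(A)W(A)^{T}+W(C)W(C)^{T}\|_{2},$$
so it suffices to evaluate $W(A)W(A)^{T}+W(C)W(C)^{T}$ and show that it factors as a Kronecker product $I_{s}\otimes V$ with the $V$ of \eqref{91}. The theorem then follows upon taking square roots, invoking $\|I_{s}\otimes V\|_{2}=\|V\|_{2}$ (a consequence of \eqref{2.4}), and substituting into \eqref{311} (with ${\rm vec}(K)={\rm vec}(A)$, ${\rm vec}(L)={\rm vec}(C)$).

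First, I would write $W(A)=-(X_{1}+X_{2}\Pi_{mn})$ with $X_{1}=C^{\ddagger^{T}}_{A}\otimes(PQP)^{\dag}A^{T}J$ and $X_{2}=(JAC^{\ddagger}_{A})^{T}\otimes(PQP)^{\dag}$, and similarly write $W(C)=-(Y_{1}-Y_{2}\Pi_{sn}-Y_{3}\Pi_{sn})$ as a signed sum of three Kronecker products. Expanding $W(A)W(A)^{T}$ then produces four terms and $W(C)W(C)^{T}$ produces nine; for each I would repeatedly apply \eqref{2.5}, \eqref{2.7}, the unitarity $\Pi\Pi^{T}=I$, and the commutation rule \eqref{2.6}. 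For the "diagonal" terms, identities such as $JJ^{T}=I$ and the definition $Q=A^{T}JA$ collapse the inner products cleanly. The challenge is the $\Pi$-mediated cross terms (e.g.\ $X_{1}\Pi_{mn}^{T}X_{2}^{T}$); these are handled by pushing $\Pi$ through one of the Kronecker factors via \eqref{2.6} and then collapsing the resulting inner products.

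The algebraic collapse at this stage relies on the projector identities already established in the proof of Lemma \ref{lem3.1}, namely \eqref{3001}, \eqref{3002}, \eqref{3003}, and \eqref{5003}, in particular $P(PQP)^{\dag}=(PQP)^{\dag}$, $(PQP)^{\dag}QP=P$, $CP=0$, and $CP(PQP)^{\dag}=0$, together with $C^{\dag}(I-CC^{\dag})=0$ and $(I-C^{\dag}C)C^{\dag^{T}}=0$. These relations force the $\Pi$-mediated cross terms either to vanish or to rearrange symmetrically so that the surviving contributions organize into blocks proportional to $(PQP)^{\dag^{2}}$, $C^{\ddagger}_{A}C^{\ddagger^{T}}_{A}$, and $C^{\ddagger}_{A}C^{\dag^{T}}$, with scalar weights that are precisely the squared spectral norms appearing in \eqref{91}.

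Once the sum $W(A)W(A)^{T}+W(C)W(C)^{T}$ is written in the factored Kronecker form $I_{s}\otimes V$, the spectral norm computation reduces to $\|V\|_{2}$ and the proof is complete. The main obstacle is the bookkeeping in the middle step: there are on the order of a dozen cross terms carrying $\Pi_{mn}$ or $\Pi_{sn}$ permutations, and only after a careful pairing of them — first commuting Kronecker factors via \eqref{2.6}, then invoking $CP=0$ and $P(PQP)^{\dag}=(PQP)^{\dag}$ to kill the off-diagonal pieces — do they collapse into the pure-Kronecker assembly required for the clean identity $\|I_{s}\otimes V\|_{2}=\|V\|_{2}$ to apply.
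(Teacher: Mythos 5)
Your strategy coincides with the paper's: the paper also passes to $\|[W(A),W(C)]\|_{2}^{2}=\|W(A)W(A)^{T}+W(C)W(C)^{T}\|_{2}$, splits $W(A)$ into its two Kronecker summands and $W(C)$ into its three, and annihilates every $\Pi$-mediated cross term using precisely the identities you list, chiefly $(PQP)^{\dag}C^{\ddagger}_{A}=0$, $C^{\dag}(I-CC^{\dag})=0$, and $C^{\ddagger}_{A}(I-C^{\dag}C)=0$-type cancellations, before assembling the surviving Gram blocks into $V$. In structure, in the decomposition chosen, and in the list of projector identities invoked, your argument is the paper's argument.

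The one point where you go beyond the paper is the claim that the surviving sum factors exactly as $I_{s}\otimes V$. That step fails as stated: each surviving block has the form $G_{i}\otimes H_{i}$ where $G_{i}$ is an $s\times s$ Gram matrix such as $(JAC^{\ddagger}_{A})^{T}(JAC^{\ddagger}_{A})$ or $C^{\ddagger^{T}}_{A}C^{\ddagger}_{A}$, and these are not scalar multiples of $I_{s}$ in general, so the sum is $\sum_{i}G_{i}\otimes H_{i}$ rather than $I_{s}\otimes V$. What the paper actually does is replace each left factor $G_{i}$ by the scalar $\|G_{i}\|_{2}$ (equivalently, it records each individual term's norm $\|G_{i}\|_{2}\|H_{i}\|_{2}$ via \eqref{2.4}) and then reads the total off as $\|V\|_{2}$ with $V=\sum_{i}\|G_{i}\|_{2}H_{i}$; this is a term-by-term substitution, not a Kronecker factorization, and \eqref{2.4} does not by itself justify $\bigl\|\sum_{i}G_{i}\otimes H_{i}\bigr\|_{2}=\bigl\|\sum_{i}\|G_{i}\|_{2}H_{i}\bigr\|_{2}$ for a sum. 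So either supply an argument for that equality of norms of sums, or state the final step in the paper's (weaker, term-by-term) form; as written, the $I_{s}\otimes V$ factorization is the one genuinely unsupported link in an otherwise faithful reconstruction.
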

\begin{proof}
Note that
\begin{align}\label{320}
\|[V(A),V(C)]\|_{2}=\|[V_1,V_2]\|_{2}=\|[V_1,V_2][V_1,V_2]^{T}\|^{1/2}_{2}=\|V_{1}V^{T}_{1}+V_{2}V^{T}_{2}\|^{1/2}_{2}.
\end{align}
In the following, we compute $V_{1}V^{T}_{1}$ and $V_{2}V^{T}_{2}$, respectively.

Firstly, let

   $$V_{11}=-((JAC^{\ddagger}_{A})^{T}\otimes (PQP)^{\dag})\Pi_{mn},\,\,\,\,\,\,\ V_{12}=-(C^{\ddagger^{T}}_{A} \otimes (PQP)^{\dag}A^{T}J).$$
Then
\begin{align}\label{32}
V_{1}V^{T}_{1}&=V_{11}V^{T}_{11}+ V_{12}V^{T}_{12}+V_{11}V^{T}_{12}+V_{12}V^{T}_{11}.
\end{align}
By \eqref{2.5} and \eqref{2.7}, we get
\begin{align}
V_{11}V^{T}_{11}&=((JAC^{\ddagger}_{A})^{T}\otimes (PQP)^{\dag})(JAC^{\ddagger}_{A}\otimes (PQP)^{\dag^{T}})=\|JAC^{\ddagger}_{A}\|^{2}_{2}(PQP)^{\dag^{2}}, \label{33} \\
V_{12}V^{T}_{12}&=(C^{\ddagger^{T}}_{A} \otimes (PQP)^{\dag}A^{T}J)(C^{\ddagger}_{A} \otimes ((PQP)^{\dag}A^{T}J)^{T})=\|C^{\ddagger}_{A}\|^{2}_{2}
\|(PQP)^{\dag}A^{T}J\|^{2}_{2} \label{34}.
\end{align}
Note that
\begin{align}\label{35}
(PQP)^{\dag} C^{\ddagger}_{A}&=(PQP)^{\dag}(I- (PQP)^{\dag}Q)C^{\dag}\nonumber \quad \textrm{ by \eqref{1.1} }  \\
&=((PQP)^{\dag}-(PQP)^{\dag}(PQP)(PQP)^{\dag})C^{\dag}=0.
\end{align}
Thus by \eqref{2.5}, \eqref{2.7}, \eqref{2.6} and \eqref{35}, we have
\begin{align}\label{36}
V_{11}V^{T}_{12}&=((JAC^{\ddagger}_{A})^{T}\otimes (PQP)^{\dag})(((PQP)^{\dag}A^{T}J)^{T} \otimes C^{\ddagger}_{A}) \nonumber \quad \textrm{ by \eqref{2.5} and \eqref{2.6} }  \\
&=((JAC^{\ddagger}_{A})^{T}((PQP)^{\dag}A^{T}J)^{T})\otimes ((PQP)^{\dag} C^{\ddagger}_{A}) \nonumber  \quad \textrm{ by \eqref{2.7}}\\
&=0=(V_{12}V^{T}_{11}))^{T}.\quad \textrm{ by \eqref{35}}
\end{align}
Substituting \eqref{33},\eqref{34}, \eqref{36} into \eqref{32}, we get
\begin{align}\label{37}
V_{1}V^{T}_{1}&=\|JAC^{\ddagger}_{A}\|^{2}_{2}(PQP)^{\dag^{2}}+\|C^{\ddagger^{T}}_{A}\|^{2}_{2}
\|(PQP)^{\dag}A^{T}J\|^{2}_{2}.
\end{align}
Now, let\\
 $V_{21}=((I-CC^{\dag})^{T}\otimes C^{\ddagger}_{A}C^{\dag^{T}})\Pi_{sn},\,\,\,\,\,\,\ V_{22}=((C^{\dag^{T}}QC^{\ddagger}_{A})^{T}\otimes (PQP)^{\dag})\Pi_{sn},\,\,\,\,\,\,\ V_{23}=-(C^{\ddagger^{T}}_{A}\otimes C^{\ddagger}_{A}).$ \\
 Then
\begin{align}\label{38}
V_{2}V^{T}_{2}=V_{21}V^{T}_{21}+V_{22}V^{T}_{22}+V_{23}V^{T}_{23}+V_{21}V^{T}_{22}-V_{22}V^{T}_{23}-V_{21}V^{T}_{23}.
\end{align}
By \eqref{2.5} and \eqref{2.7}, we get
\begin{align}
V_{21}V^{T}_{21}&=((I-CC^{\dag})^{T}\otimes C^{\ddagger}_{A}C^{\dag^{T}})((I-CC^{\dag})\otimes (C^{\ddagger}_{A}C^{\dag^{T}})^{T})=\|I-CC^{\dag}\|^{2}_{2}\|C^{\ddagger}_{A}C^{\dag^{T}}\|^{2}_{2}. \label{4011}\\
V_{22}V^{T}_{22}&=((C^{\dag^{T}}QC^{\ddagger}_{A})^{T}\otimes (PQP)^{\dag})((C^{\dag^{T}}QC^{\ddagger}_{A})\otimes (PQP)^{\dag^{T}})=\|C^{\dag^{T}}QC^{\ddagger}_{A}\|^{2}_{2}(PQP)^{\dag^{2}} ,\label{39} \\
V_{23}V^{T}_{23}&=(C^{\ddagger^{T}}_{A}\otimes C^{\ddagger}_{A})(C^{\ddagger}_{A} \otimes C^{\ddagger^{T}}_{A})=\|C^{\ddagger}_{A}\|^{2}_{2}C^{\ddagger}_{A}C^{\ddagger^{T}}_{A}. \label{40}
\end{align}
By \eqref{2.5}, \eqref{2.7}, \eqref{2.6} and \eqref{1.4}, we obtain

\begin{align}
V_{21}V^{T}_{22}&=((I-CC^{\dag})^{T}\otimes C^{\ddagger}_{A}C^{\dag^{T}})((C^{\dag^{T}}QC^{\ddagger}_{A})\otimes (PQP)^{\dag^{T}})  \nonumber \quad \textrm{ by \eqref{2.5} and \eqref{2.6} }\\
&= ((I-CC^{\dag})^{T}C^{\dag^{T}}QC^{\ddagger}_{A})\otimes  (C^{\ddagger}_{A}C^{\dag^{T}}(PQP)^{\dag^{T}})   \nonumber  \quad \textrm{ by \eqref{2.7}}\\
&= (C^{\dag}(I-CC^{\dag}))^{T}QC^{\ddagger}_{A} \otimes  (C^{\ddagger}_{A}C^{\dag^{T}}(PQP)^{\dag^{T}})      \nonumber  \quad \textrm{ by \eqref{1.4}} \\
&= 0 \otimes (C^{\ddagger}_{A}C^{\dag^{T}}(PQP)^{\dag^{T}}) =0=(V_{22}V^{T}_{21})^{T}. \label{41}
\end{align}

\begin{align}
V_{21}V^{T}_{23}&=((I-CC^{\dag})^{T}\otimes C^{\ddagger}_{A}C^{\dag^{T}})(C^{\ddagger^{T}}_{A}\otimes C^{\ddagger}_{A})\nonumber \quad \textrm{ by \eqref{2.5} and \eqref{2.6} }\\
&= ((I-CC^{\dag})^{T}C^{\ddagger^{T}}_{A} )\otimes  (C^{\ddagger}_{A}C^{\dag^{T}} C^{\ddagger}_{A})   \nonumber  \quad \textrm{ by \eqref{2.7}}\\
&= (C^{\ddagger}_{A}(I-CC^{\dag}))^{T} \otimes  (C^{\ddagger}_{A}C^{\dag^{T}} C^{\ddagger}_{A})     \nonumber  \quad \textrm{ by \eqref{1.4}} \\
&= ((I- (PQP)^{\dag}Q)C^{\dag}(I-CC^{\dag}))^{T}  \otimes  (C^{\ddagger}_{A}C^{\dag^{T}} C^{\ddagger}_{A})=0=(V_{23}V^{T}_{21})^{T}. \label{41}
\end{align}
\begin{align}
V_{22}V^{T}_{23}&=((C^{\dag^{T}}QC^{\ddagger}_{A})^{T}\otimes (PQP)^{\dag})(C^{\ddagger^{T}}_{A}\otimes C^{\ddagger}_{A})\nonumber \quad \textrm{ by \eqref{2.5} and \eqref{2.6} }\\
&= ((C^{\dag^{T}}QC^{\ddagger}_{A})^{T}C^{\ddagger^{T}}_{A} )\otimes ( (PQP)^{\dag} C^{\ddagger}_{A})   \nonumber  \quad \textrm{ by \eqref{2.7}}\\
&= ((C^{\dag^{T}}QC^{\ddagger}_{A})^{T}C^{\ddagger^{T}}_{A} )\otimes 0=0=(V_{23}V^{T}_{22})^{T}    \nonumber  \quad \textrm{ by \eqref{1.4}} \\
\end{align}

Substituting \eqref{39}, \eqref{40}, and \eqref{41} into \eqref{38} implies
\begin{align}\label{42}
V_{2}V^{T}_{2}&=\|I-CC^{\dag}\|^{2}_{2}\|C^{\ddagger}_{A}C^{\dag^{T}}\|^{2}_{2}+\|C^{\dag^{T}}QC^{\ddagger}_{A}\|^{2}_{2}(PQP)^{\dag^{2}}  +\|C^{\ddagger}_{A}\|^{2}_{2}C^{\ddagger}_{A}C^{\ddagger^{T}}_{A}.
\end{align}

Putting  \eqref{311}, \eqref{320}, \eqref{37}, and \eqref{42} together and setting $V=V_{1}V^{T}_{1}+V_{2}V^{T}_{2}$, we have the desired result  \eqref{31}.
\end{proof}
\begin{remark}\label{rem50}
{\rm Using the GHQR factorization \cite{[9]} of the matrix pair $A$ and $C$   in \eqref{1.1} and \eqref{1.2}:
\begin{align}\label{2.8}
H^{T}AQ=&\begin{pmatrix}
           L_{11} & 0 \\
           L_{21} & L_{22} \\
         \end{pmatrix}
, \,\,\,\,\,\,\ U^{T}CQ=\begin{pmatrix}
           K_{11} & 0 \\
           0 & 0 \\
         \end{pmatrix},
\end{align}
where $U \in \mathbb{R}^{l \times l}$ and $Q \in \mathbb{R}^{n \times n}$ and a $J-$orthogonal matrix, $H \in \mathbb{R}^{(p+q)\times (p+q)}$ (i.e., $HJH^{T}=J$), $L_{22}$  and $K_{11}$ are lower triangular and non-singular. We have

$$C^{\ddagger}_{A}=Q\left(
                 \begin{array}{c}
                   I \\
                   - L^{-1}_{22} L_{21} \\
                 \end{array}
               \right)K^{-1}_{11}U^{T}_{1} ,\,\,\,\,\ (PQP)^{\dag}A^{T}J=Q\left(
                 \begin{array}{c}
                    0 \\
                    L^{-1}_{22} \\
                 \end{array}
               \right)H_{2}^{T}, \,\,\,\,\ (PQP)^{\dag}=Q\left(
                 \begin{array}{c}
                    0 \\
                   -( L^{T}_{22} L_{22})^{-1} \\
                 \end{array}
               \right)Q^{T},$$ \,\,\,\,\ $$C^{\ddagger}_{A}C^{\dag^{T}}=\left(
                 \begin{array}{c}
                    0 \\
                   - L^{-1}_{22} L_{22} \\
                 \end{array}
               \right)(K^{-1}_{11})^{T} Q^{T}, \,\,\,\,\   C^{\dag^{T}}QC^{\ddagger}_{A}=U^{-T}_{1}K^{-T}_{11}L^{-1}_{11}JL_{11}K^{-1}_{11}U^{T}_{1},$$  \,\,\,\,\,\,\,\,\,\  $$JAC^{\ddagger}_{A}=JL_{11}  K^{-T}_{11}U^{T}_{1}, \,\,\,\,\ C^{\ddagger}_{A}C^{\ddagger^{T}}_{A}= Q\left(
                 \begin{array}{c}
                   I \\
                   - L^{-1}_{22} L_{21} \\
                 \end{array}
               \right)K^{-1}_{11} K^{-T}_{11}\left(
                                               \begin{array}{cc}
                                                 I &  - L^{-1}_{22} L_{21} \\
                                               \end{array}
                                             \right)Q^{T}, \,\,\,\,\ CC^{\dag}= U_{1}\left(
                                                                            \begin{array}{cc}
                                                                              K_{11}K^{-1}_{11} & 0 \\
                                                                            \end{array}
                                                                          \right)U^{T}_{1}, $$
where $U=(U_{1},\, U_{2})$,  $H=[H_{1},\, H_{2}]$; $U_{1}$ and $H_{1}$ are respectively the submatrices of $U$ and $H$
obtained by taking the first $r$ columns. Putting all the above terms into \eqref{91} leads to
\begin{align}\label{91}
V&=\left(\|JAC^{\ddagger}_{A}\|^{2}_{2}+\|C^{\dag^{T}}QC^{\ddagger}_{A}\|^{2}_{2}\right)  (PQP)^{\dag^{2}}+\|C^{\ddagger}_{A}\|^{2}_{2}
\left(\|(PQP)^{\dag}A^{T}J\|^{2}_{2}+ C^{\ddagger}_{A}C^{\ddagger^{T}}_{A}  \right)+\|I-CC^{\dag}\|^{2}_{2}\|C^{\ddagger}_{A}C^{\dag^{T}}\|^{2}_{2}.\nonumber\\
\end{align}
\begin{align*}
V&= \left(\| JL_{11}  K^{-T}_{11}\|^{2}_{2}+\|K^{-T}_{11}L^{-1}_{11}JL_{11}K^{-1}_{11}\|^{2}_{2}\right)\left(Q\left(
                 \begin{array}{c}
                    0 \\
                   -( L^{T}_{22} L_{22})^{-1} \\
                 \end{array}
               \right)Q^{T}\right)^{2}\\
              &\quad +\left\|\left(
                 \begin{array}{c}
                   I \\
                   - L^{-1}_{22} L_{21} \\
                 \end{array}
               \right)K^{-1}_{11} \right\|^{2}_{2}  \left( \left\|\left(
                 \begin{array}{c}
                    0 \\
                    L^{-1}_{22} \\
                 \end{array}
               \right)H_{2}^{T} \right\|^{2}_{2}+Q\left(
                 \begin{array}{c}
                   I \\
                   - L^{-1}_{22} L_{21} \\
                 \end{array}
               \right)K^{-1}_{11} K^{-T}_{11}\left(
                                               \begin{array}{cc}
                                                 I &  - L^{-1}_{22} L_{21} \\
                                               \end{array}
                                             \right)Q^{T} \right)\\
                                            &\quad +\left \|I-U_{1}\left(\begin{array}{cc}
                                                                              K_{11}K^{-1}_{11} & 0 \\
                                                                            \end{array}
                                                                          \right) \right \|^{2}_{2}\left \|\left(
                 \begin{array}{c}
                    0 \\
                   - L^{-1}_{22} L_{22} \\
                 \end{array}
               \right)(K^{-1}_{11})^{T}  \right \|^{2}_{2}.
\end{align*}
}

\end{remark}
\begin{remark}\label{rem 3.4}
{\rm With the help of the expression of $\mathrm{d}(C^{\ddagger}_{A})$, we can find $\mathrm{d}x$, where
\begin{equation}\label{1001}
x=C^{\ddagger}_{A}h+(PQP)^{\dag}A^{T}Jg
\end{equation}
is the solution to the following ILSEP:
\begin{equation}\label{1000}
{\rm ILSEP:}\quad\mathop {\min }\limits_{\left\|g - Cx\right\|_2} (g-Ax)^{T}J(g-Ax),
\end{equation}
where $g \in \mathbb{R}^{m}$ and $h \in \mathbb{R}^{s}$.
 Specifically, differentiating both sides of  \eqref{1001} 
 , we have
\begin{align*}
\mathrm{d}x&= \mathrm{d}(C^{\ddagger}_{A}h+(PQP)^{\dag}A^{T}Jg).
\end{align*}
Thus, using  \eqref{3001}, we have
\begin{eqnarray*}
\mathrm{d}x&=& \mathrm{d}(C^{\ddagger}_{A}h+P(PQP)^{\dag}A^{T}Jg)=\mathrm{d}(C^{\ddagger}_{A})h+C^{\ddagger}_{A}\mathrm{d}h
+\mathrm{d}P(PQP)^{\dag}A^{T}Jg+P\mathrm{d}(PQP)^{\dag}A^{T}Jg \\
&&\quad\quad\quad\quad\quad \quad\quad\quad\quad\quad\quad+P(PQP)^{\dag}\mathrm{d}A^{T}Jg+P(PQP)^{\dag}A^{T}J\mathrm{d}g.
\end{eqnarray*}
Substituting  \eqref{3.5} into the above equation and using \eqref{2.10} lead to
\begin{eqnarray*}
\mathrm{d}x&=&[-C^{\ddagger}_{A}\mathrm{d}CC^{\ddagger}_{A}+C^{\ddagger}_{A}C^{\dag^{T}}\mathrm{d}C^{T}(I-CC^{\dag})
+(PQP)^{\dag}\mathrm{d}C^{T}C^{\dag^{T}}QC^{\ddagger}_{A}-(PQP)^{\dag} A^{T}J \mathrm{d}AC^{\ddagger}_{A}\nonumber  \\
&&-(PQP)^{\dag}\mathrm{d}A^{T}JAC^{\ddagger}_{A}]h+\mathrm{d}(I-C^{\dag}C)(PQP)^{\dag}A^{T}Jg+P[-(PQP)^{\dag}\mathrm{d}(PQP)(PQP)^{\dag}\nonumber\\
&&+(PQP)^{\dag}(PQP)^{\dag^{T}}\mathrm{d}(PQP)^{T}(I-(PQP)(PQP)^{\dag})\nonumber\\
&&+(I-(PQP)^{\dag}(PQP))\mathrm{d}(PQP)^{T}(PQP)^{\dag^{T}}(PQP)^{\dag}]A^{T}Jg\nonumber\\
&&+P(PQP)^{\dag}\mathrm{d}A^{T}Jg+P(PQP)^{\dag}A^{T}J\mathrm{d}g+C^{\ddagger}_{A}\mathrm{d}h,
\end{eqnarray*}
which together with  \eqref{3001}, \eqref{3002} and \eqref{3003} give
\begin{eqnarray*}
\mathrm{d}x&=&-C^{\ddagger}_{A}\mathrm{d}CC^{\ddagger}_{A}h+C^{\ddagger}_{A}C^{\dag^{T}}\mathrm{d}C^{T}(I-CC^{\dag})h
+(PQP)^{\dag}\mathrm{d}C^{T}C^{\dag^{T}}QC^{\ddagger}_{A}h-(PQP)^{\dag} A^{T}J \mathrm{d}AC^{\ddagger}_{A}h \nonumber  \\
&&-(PQP)^{\dag}\mathrm{d}A^{T}JAC^{\ddagger}_{A}h-C^{\dag}\mathrm{d}C(PQP)^{\dag}A^{T}Jg-(PQP)^{\dag}\mathrm{d}QP(PQP)^{\dag}A^{T}Jg\nonumber\\
&&-(PQP)^{\dag}Q\mathrm{d}P(PQP)^{\dag}A^{T}Jg+(PQP)^{\dag}PQP(PQP)^{\dag}\mathrm{d}P^{T}(I-(QP)(PQP)^{\dag})A^{T}Jg\nonumber\\
&&+(PQP)^{\dag}(PQP)^{\dag^{T}}\mathrm{d}Q^{T}P(I-(QP)(PQP)^{\dag})A^{T}Jg+P(PQP)^{\dag}\mathrm{d}A^{T}Jg+P(PQP)^{\dag}A^{T}J\mathrm{d}g+C^{\ddagger}_{A}\mathrm{d}h.\nonumber
\end{eqnarray*}
Noting \eqref{5003},  the above equation can be rewritten as
\begin{eqnarray*}
\mathrm{d}x&=&-C^{\ddagger}_{A}\mathrm{d}CC^{\ddagger}_{A}h+C^{\ddagger}_{A}C^{\dag^{T}}\mathrm{d}C^{T}(I-CC^{\dag})h
+(PQP)^{\dag}\mathrm{d}C^{T}C^{\dag^{T}}A^{T}JAC^{\ddagger}_{A}h-(PQP)^{\dag} A^{T}J \mathrm{d}AC^{\ddagger}_{A}h
\nonumber  \\
&&-(PQP)^{\dag}\mathrm{d}A^{T}JAC^{\ddagger}_{A}h-C^{\dag}\mathrm{d}C(PQP)^{\dag}A^{T}Jg+(PQP)^{\dag}\mathrm{d}A^{T}J(g-A(PQP)^{\dag}A^{T}Jg)\nonumber\\
&&-(PQP)^{\dag}A^{T}J\mathrm{d}A(PQP)^{\dag}A^{T}Jg+(PQP)^{\dag}QC^{\dagger}\mathrm{d}C(PQP)^{\dag}A^{T}Jg\nonumber\\
&&-(PQP)^{\dag}\mathrm{d}C^{T} C^{\dag^{T}}A^{T}J(g-A(PQP)^{\dag}A^{T}Jg)+(PQP)^{\dag}A^{T}J\mathrm{d}g+C^{\ddagger}_{A}\mathrm{d}h.\nonumber
\end{eqnarray*}

Further, by \eqref{3001} and \eqref{1001}, we have

\begin{eqnarray}
\mathrm{d}x&=&-C^{\ddagger}_{A}\mathrm{d}C(C^{\ddagger}_{A}h+(PQP)^{\dag}A^{T}Jg)+C^{\ddagger}_{A}C^{\dag^{T}}\mathrm{d}C^{T}(I-CC^{\dag})h
-(PQP)^{\dag} A^{T}J \mathrm{d}A(C^{\ddagger}_{A}h+(PQP)^{\dag}A^{T}Jg)
\nonumber  \\
&&-(PQP)^{\dag}\mathrm{d}C^{T} C^{\dag^{T}}A^{T}J(g-A(C^{\ddagger}_{A}h+(PQP)^{\dag}A^{T}Jg))\nonumber\\
&&+(PQP)^{\dag}\mathrm{d}A^{T}J(g-A(C^{\ddagger}_{A}h+(PQP)^{\dag}A^{T}Jg))+(PQP)^{\dag}A^{T}J\mathrm{d}g+C^{\ddagger}_{A}\mathrm{d}h\nonumber \quad \textrm{by \eqref{3001}} \\
&=&-C^{\ddagger}_{A}\mathrm{d}Cx+C^{\ddagger}_{A}C^{\dag^{T}}\mathrm{d}C^{T}\rho
-(PQP)^{\dag} A^{T}J \mathrm{d}Ax-(PQP)^{\dag}\mathrm{d}C^{T} C^{\dag^{T}}A^{T}Jr \nonumber  \\
&&+(PQP)^{\dag}\mathrm{d}A^{T}Jr+(PQP)^{\dag}A^{T}J\mathrm{d}g+C^{\ddagger}_{A}\mathrm{d}h  \quad \textrm{by \eqref{1001}} \label{4.5}
\end{eqnarray}

where $s=Jr=J(g-Ax)$, $\rho=(I-CC^{\dag})h$. By applying the `vec' operator on \eqref{4.5}, and noting \eqref{2.2} and \eqref{2.3}, we get
\begin{eqnarray*}
\mathrm{d}x &=&-(x^{T}\otimes(PQP)^{\dag}A^{T}J){\rm {vec}}(\mathrm{d}A)+(s^{T}\otimes(PQP)^{\dag}){\rm {vec}}(\mathrm{d} A^{T})-(x^{T}\otimes C^{\ddagger}_{A}){\rm {vec}}(\mathrm{d}C)\\
&&+(\rho^{T}  \otimes C^{\ddagger}_{A}C^{\dag^{T}}) {\rm {vec}}(\mathrm{d}C^{T})+(C^{\dag^{T}} A^{T}s)^{T}\otimes (PQP)^{\dag} ){\rm {vec}}(\mathrm{d}C^{T})+(PQP)^{\dag}\mathrm{d}g+C^{\ddagger}_{A}\mathrm{d}h \quad \textrm{ by \eqref{2.2}} \\
&=& [ -(x^{T}\otimes(PQP)^{\dag}A^{T}J)+(s^{T}\otimes(PQP)^{\dag})\Pi_{mn}]{\rm {vec}}(\mathrm{d}A)-[(x^{T}\otimes C^{\ddagger}_{A})-(\rho^{T}\otimes C^{\ddagger}_{A}C^{\dag^{T}})\Pi_{sn}\\
&&-((C^{\dag^{T}} A^{T}s)^{T}\otimes (PQP)^{\dag})\Pi_{sn}]{\rm {vec}}(\mathrm{d}C)  +(PQP)^{\dag}\mathrm{d}g+C^{\ddagger}_{A}\mathrm{d}h \quad \textrm{ by \eqref{2.3}}\\
&=& \big[-(x^{T}\otimes(PQP)^{\dag}A^{T}J)+(s^{T}\otimes(PQP)^{\dag})\Pi_{mn},  \,\  -(x^{T}\otimes C^{\ddagger}_{A})+(\rho^{T}\otimes C^{\ddagger}_{A}C^{\dag^{T}})\Pi_{sn}\\
&&+((C^{\dag^{T}} A^{T}s)^{T}\otimes (PQP)^{\dag})\Pi_{sn},  \,\
(PQP)^{\dag},  \,\    C^{\ddagger}_{A}\big]\begin{bmatrix}
{\rm {vec}}(\mathrm{d}A)\\
{\rm {vec}}(\mathrm{d}C)\\
\mathrm{d} g\\
\mathrm{d} h
\end{bmatrix}.
\end{eqnarray*}
By the above result, we can recover the condition numbers of ILSEP given in \cite{[5],[12]}.
Furthermore, note that $r=(g-A(C^{\ddagger}_{A}h +(PQP)^{\dag}A^{T}Jg))$. Thus, using the similar method, we can obtain $\mathrm{d}r$ and hence the condition numbers for residuals of ILSEP.
}
\end{remark}

\section{Statistical condition estimates}
\label{sec.6}
In this part, we focus on  estimating the normwise, mixed and componentwise condition numbers for the generalized inverse $C^{\ddagger}_{A}.$
\subsection{Estimating normwise condition number}
We use two algorithms to estimate the normwise condition number. The first one, outlined in Algorithm 1, is from  \cite{[18]} and has been applied to estimate the normwise condition number for matrix equations \cite{[25],[26]}, K-weighted pseudoinverse $K^{\dagger}_{L}$  \cite{[72]}, and indefinite least square problem \cite{[12]}. The second one, outlined in Algorithm 2, is based on the SSCE method \cite{[19]} and has been used for some least squares problems \cite{[12],[28],[72]}.
\begin{algorithm}[htbp]
\caption{Probabilistic condition estimator}\label{PCE}\small
$\textbf{Input:}$ $\epsilon$, $d$ ($d$ is the dimension of Krylov space and usually determined by the algorithm itself) and matrix $V$ in \eqref{91}.\\
$\textbf{Output:}$ Probabilistic spectral norm estimator of the normwise condition number \eqref{31}: $n^{\ddagger}_{p}(A,C).$
\begin{enumerate}
  \item Choose a starting vector $v_0$ uniformly and randomly from the unit $t$-sphere $S_{t-1}$ with $t=n^{2}$. 
  \item Compute the guaranteed lower bound $\alpha_1$ and the probabilistic upper bound $\alpha_2$ of $\|V\|_2$ by the probabilistic spectral norm estimator \cite{[18]} .
  \item Estimate the normwise condition number \eqref{31} by
   \begin{eqnarray*}
  n^{\ddagger}_{p}(A,C)=\frac{n_{p}(A, C)\|[A,\,\,C]\|_{F}}{\|C^{\ddagger}_{A}\|_{F}}\textrm{ with }  n_{p}(A, C)&=&\sqrt{\frac{\alpha_1+\alpha_2}{2}}.
  \end{eqnarray*}
 \end{enumerate}
\end{algorithm}
\begin{algorithm}[htbp]{
\caption{SSCE method for the normwise condition number of the generalized inverse $C^{\ddagger}_{A}$ }\label{SSCE}\small
  $\textbf{Input:}$ Sample size $k$ and matrix $V$ in \eqref{91}

  $\textbf{Output:}$ SSCE estimate of the normwise condition number of the generalized inverse $C^{\ddagger}_{A}:$ $n^{\ddagger}_{s}(A,C).$
\begin{enumerate}
 \item  Generate $k$ vectors ${{z}}_1,\cdots, {{z}}_k$ uniformly and randomly from the unit $n$-sphere $S_{n-1}$ and set $Z=[{{z}}_1,\cdots, {{z}}_k]$. 
  \item Orthonormalize these vectors using the QR facotization $[Z,\sim]=QR(Z)$.
  \item For $i=1,\cdots, k$, compute ${n^{\ddagger}_{i}}(K,L)$ by:
  \begin{align*}\label{4.1}
{n^{\ddagger}_{i}}(A,C)=  \frac{\sqrt{\sigma_{i}}\|[A,\,\,C]\|_{F}}{\|C^{\ddagger}_{A}\|_{F}},
\end{align*}
where
\begin{align*}
\sigma_{i}&=\left(\|JAC^{\ddagger}_{A}\|^{2}_{2}+\|C^{\dag^{T}}QC^{\ddagger}_{A}\|^{2}_{2}\right)  z^{T}_{i}(PQP)^{\dag^{2}}z_{i}+\|C^{\ddagger}_{A}\|^{2}_{2}
z^{T}_{i}\left((PQP)^{\dag}A^{T}A(PQP)^{\dag^{T}}+ C^{\ddagger}_{A}C^{\ddagger^{T}}_{A} \right)z_{i}\nonumber\\
&+\|I-CC^{\dag}\|^{2}_{2} z^{T}_{i} ( C^{\ddagger}_{A}C^{\ddagger^{T}}_{A}C^{\ddagger}_{A}C^{\ddagger^{T}}_{A})z_{i}.\nonumber
\end{align*}
  \item Approximate $\omega_k$ and $\omega_n$ by:
  \begin{eqnarray*}\label{4.3}
\omega_k\approx \sqrt{\frac{2}{\pi(k-\frac{1}{2})}}.
\end{eqnarray*}
   \item Estimate the normwise condition number \eqref{31} by:
   \begin{eqnarray*}\label{4.2}
n^{\ddagger}_{s} (A,C) = \frac{\omega_k}{\omega_n}\sqrt{\sum_{i=1}^k {{n^{\ddagger}}}^{2}_{i} (A,C)}.
\end{eqnarray*}
\end{enumerate}}
\end{algorithm}

\subsection{Estimating mixed and componentwise condition numbers}
 To  estimate  the  mixed  and  componentwise  condition  numbers,  we  need  the  following SSCE
method, which is from \cite{[19]} and has been applied to many problems (see e.g., \cite{[12],[24],[25],[26],[72]}).
\begin{algorithm}[htbp]
\caption{SSCE method for the mixed and componentwise condition numbers of the generalized inverse $C^{\ddagger}_{A}$ }\label{m&cSSCE}\small
  $\textbf{Input:}$ Sample size $k$ and matrix
  \begin{eqnarray*}
  \mathrm{d}\phi({\rm {vec}}(A), {\rm {vec}}(C))=\begin{bmatrix}W(A), W(C)
   \\\end{bmatrix}.
\end{eqnarray*}
  $\textbf{Output:}$ SSCE estimates of mixed and componentwise condition numbers of the generalized inverse $C^{\ddagger}_{A}:$ $m^{\ddagger}_{sce}(A,C),$ $c^{\ddagger}_{sce}(A,C).$
  \begin{enumerate}
 \item Let $t=mn+sn$. Generate $k$ vectors ${{z}}_1,\cdots, {{z}}_k$ uniformly and randomly from the unit $t$-sphere $S_{t-1}$ and set $Z=[{{z}}_1,\cdots, {{z}}_k]$.
  \item Orthonormalize these vectors using the QR facotization $[Z,\sim]=QR(Z)$.
  \item  Compute $u_i=\begin{bmatrix}W(A), W(C)
  \end{bmatrix}{z}_i$, and estimate the mixed and componentwise condition numbers in \eqref{312} and \eqref{313} by
  \begin{equation*}
 m^{\ddagger}_{sce}(A,C)=\frac{\|\kappa^{\dag}_{sce}\|_{\infty}}{\|{\rm {vec}}(C^{\ddagger}_{A})\|_{\infty}}, \quad c^{\ddagger}_{sce}(A,C)=\left\|\frac{\kappa^{\dag}_{sce}}{{\rm {vec}}(C^{\ddagger}_{A})}\right\|_{\infty},
  \end{equation*} where $\kappa^{\dag}_{sce}=\frac{\omega_k}{\omega_t}\left|\sum^{k}_{i=1}|u_i|^2\right|^{\frac{1}{2}}$, and the power and square root operation are performed on each entry of $u_i$, $i=1,\cdots, k$.
 \end{enumerate}
\end{algorithm}
\vspace{-6pt}
\newpage
\section{Numerical experiments}
\label{sec.7}
\vspace{-2pt}
Similarly to \cite{[29]},
we generate the matrices $A$ and $C$ as follows
\begin{equation*}
        A=H\begin{bmatrix}
         L_{11} & 0\\
         L_{21} & L_{22} \\
       \end{bmatrix}
Q^{T},\ C= U\begin{bmatrix}
                    K_{11} & 0\\
                     0 & 0 \\
                \end{bmatrix} Q^{T},
\end{equation*}
where $H$ is J-orthogonal, i.e., $H^{T} JH = J$, $Q$ is orthogonal, and $L_{22} \in \mathbb{R}^{(n\times s)\times(n\times s)}$ and $K_{11} \in \mathbb{R}^{(s\times s)}$ are lower triangular and nonsingular. In our experiment, we let $L_{11}$ be random matrix with full column rank and $L_{21}$ be arbitrary random matrix. $H$ is random J-orthogonal matrix with specified condition number and generated
via the method given in [16]. $Q$, $L_{22}$, and $K_{11}$ are generated by QR factorization of random matrices with
specified condition numbers and pre-assigned singular value distributions. In this case, the condition numbers of
$A$ and $C$ are $\kappa(A) = n^{l_{1}}$ and $\kappa(C) = s^{l_{2}}$, respectively.

In the following, using the above matrix pair, we first compare the condition numbers and their upper bounds, and then illustrate the reliability of Algorithms \ref{PCE}, \ref{SSCE}, and  \ref{m&cSSCE}.  All numerical experiments are performed in Matlab 2016a.

 In the specific experiments for comparing the condition numbers and their upper bounds, we set $p=50$, $q=30$,  $n=40$, and $s=20$. By varying the condition numbers of $A$ and $C$, we use $1000$ matrix pairs to test the performance.
The numerical results of the ratios defined by
\begin{eqnarray*}
  r_1 &=& n^{upper}(A,C)/n^{\ddagger}(A,C),\,\ r_2= m^{upper}(A,C) /m^{\ddagger}(A,C)\,\ \textrm{and} \,\ r_3= c^{upper}(A,C)/c^{\ddagger}(A,C)
\end{eqnarray*}
are presented in Table \ref{Table1}, which indicates that the upper bounds are quite reliable.
\begin{table}[!htb]
\centering
\caption{Comparisons of condition numbers and their upper bounds}\label{Table1}
\scriptsize
\begin{tabular}{c|c|llllllllllll}
  \hline
   & $\kappa(A)$ & \multicolumn{2}{c}{$n^{1}$} &\multicolumn{2}{c}{$n^{2}$} \\
\hline
          $\kappa(C)$   & $$  & mean & max &   mean  & max  \\
\hline
 $s^{0}$  &$r_{1}$& 1.6390e+00 &1.9197e+00 & 1.0531e+00&1.3291e+00\\
           &$r_{2}$& 1.9222e+00  & 9.0141e+00&1.7660e+00&8.2944e+00\\
           &$r_{3}$& 1.0223e+00& 3.0667e+00&1.0255e+00& 5.2405e+00 \\
\hline
 $s^{1}$ &$r_{1}$&  1.4372e+00& 1.5252e+00& 1.0308e+00& 1.0654e+00\\
           &$r_{2}$&  1.8006e+00 & 1.0951e+01& 2.0837e+00 &1.1066e+01\\
           &$r_{3}$& 1.1015e+00 & 3.8644e+00&1.2116e+00&7.9928e+00 \\
\hline
 $s^{2}$ &$r_{1}$&  1.4156e+00& 1.4225e+00& 1.0365e+00& 1.0531e+00\\
           &$r_{2}$& 1.9367e+00& 1.2151e+01& 2.0581e+00& 1.7458e+01\\
           &$r_{3}$& 1.2405e+00& 4.6376e+00& 1.3910e+00& 1.4761e+01\\
\hline
 $s^{3}$ &$r_{1}$& 1.4136e+00& 1.4149e+00&  1.0396e+00& 1.0596e+00\\
           &$r_{2}$& 2.0345e+00& 1.5120e+01& 2.1578e+00& 3.1794e+01\\
           &$r_{3}$&1.3526e+00 & 9.8179e+00&1.4360e+00& 1.4452e+01\\
\hline
&$\kappa(A)$  & \multicolumn{2}{c}{$n^{3}$} &\multicolumn{2}{c}{$n^{4}$}\\
\hline
 $s^{0}$ &$r_{1}$& 1.0068e+00  & 1.4799e+00&  1.0015e+00&1.0037e+00\\
           &$r_{2}$& 1.6997e+00& 1.0488e+01  &1.6313e+00&1.3419e+01\\
           &$r_{3}$&1.0891e+00& 4.1369e+00 &1.0881e+00& 4.0696e+00 \\
\hline
 $s^{1}$ &$r_{1}$&  1.0013e+00&  1.0208e+00&  1.0043e+00&  1.0145e+00\\
           &$r_{2}$& 2.4950e+00 &  8.6450e+01& 2.3762e+00 &6.8388e+01\\
           &$r_{3}$&1.3941e+00 & 2.2704e+01&1.4708e+00&2.8664e+01\\
\hline
 $s^{2}$ &$r_{1}$&  1.0003e+00& 1.0011e+00&1.0062e+00&1.0092e+00\\
           &$r_{2}$& 2.4379e+00& 4.9124e+01& 2.6558e+00& 8.6109e+01\\
           &$r_{3}$&1.5545e+00 &1.6603e+01&1.7349e+00&4.3272e+01\\
\hline
 $s^{3}$ &$r_{1}$& 1.0002e+00& 1.0013e+00&1.0007e+00&1.0054e+00\\
           &$r_{2}$& 2.5608e+00& 8.7733e+01& 2.2141e+00& 3.8474e+01\\
           &$r_{3}$& 1.6588e+00 &4.0571e+01&1.6631e+00&5.7799e+01\\
\hline
\end{tabular}
\end{table}

 For Algorithm \ref{PCE}, we choose the parameters: $\delta= 0.01$ and $\epsilon=0.001$.
 For Algorithms \ref{SSCE} and \ref{m&cSSCE}, we set $k=3$. By varying the condition numbers of $A$ and $C$, we have the numerical results on
 the ratios defined as follows:
\begin{eqnarray*}
  r_p &=& n^{\ddagger}_{p}(A,C)/n^{\ddagger}(A,C),\;r_s=n^{\ddagger}_{s}(A,C)/n^{\ddagger}(A,C), \\
    r_m &=&  m^{\ddagger}_{sce}(A,C)/m^{\ddagger}(A,C),\ r_c= c^{\ddagger}_{sce}(A,C)/c^{\ddagger}(A,C).
\end{eqnarray*}
We present these numerical results and CPU time in Figures 1--2. The time ratios are defined by
\begin{eqnarray*}
t_{p}:= \frac{t_{1}}{t}, \quad t_{s}:= \frac{t_{2}}{t}, \quad
t_{m}&:=&\frac{t_{3}}{t} , \quad   t_{c}:=\frac{t_{4}}{t},
\end{eqnarray*}
where $t$ is the CPU time of computing the generalized inverse $C^{\ddagger}_{A}$  by  GHQR decomposition \cite{[3]} and $t_{1}$, $t_{2}$, $t_{3}$ and $t_{4}$  are the CPU time of Algorithm 1, 2 and 3.
These results suggest that these three algorithms are very effective and reliable in estimating condition numbers.

\vspace{-6pt}

\end{document}